\documentclass{amsart}

\usepackage{etex}
\usepackage{amsmath, amssymb}
\usepackage{array}
\usepackage[frame,cmtip,arrow,matrix,line,graph,curve]{xy}
\usepackage{graphpap, color, paralist, pstricks}
\usepackage[mathscr]{eucal}
\usepackage[pdftex]{graphicx}
\usepackage[pdftex,colorlinks,backref=page,citecolor=blue]{hyperref}
\usepackage{pifont}
\usepackage{cancel}
\usepackage{mathtools}
\usepackage{tikz}

\setlength{\oddsidemargin}{0in}
\setlength{\evensidemargin}{0in}
\setlength{\marginparwidth}{0in}
\setlength{\marginparsep}{0in}
\setlength{\marginparpush}{0in}
\setlength{\topmargin}{0in}
\setlength{\headsep}{8pt}
\setlength{\footskip}{.3in}
\setlength{\textheight}{9in}
\setlength{\textwidth}{6.5in}
\setlength{\parskip}{4pt}
\linespread{1.2}

\newtheorem{thm}{Theorem}[section]
\newtheorem{prop}[thm]{Proposition}
\newtheorem{cor}[thm]{Corollary}

\newtheorem{lemma}[thm]{Lemma}
\theoremstyle{definition}

\newcommand{\pr}{\mathbb{P}}

\newcommand{\beq}[1]{\begin{equation}\label{#1}}
\newcommand{\enq}[0]{\end{equation}}

\newcommand{\mn}[0]{\medskip\noindent}
\newcommand{\nin}[0]{\noindent}

\newcommand{\sub}[0]{\subseteq}
\newcommand{\sm}[0]{\setminus}
\renewcommand{\dots}[0]{,\ldots,}

\newcommand{\ra}[0]{\rightarrow}

\newcommand{\II}[0]{{\bf I}}

\newcommand{\TT}[0]{{\bf t}}

\newcommand{\ga}[0]{\alpha }

\newcommand{\gd}[0]{\delta }

\newcommand{\gO}[0]{\Omega}

\newcommand{\gz}[0]{\zeta}
\newcommand{\eps}[0]{\varepsilon }
\newcommand{\vt}[0]{\vartheta}

\newcommand{\comments}[1]{}




\newcommand{\mis}[0]{{\rm mis}}
\newcommand{\im}[0]{{\rm im}}
\newcommand{\itm}[0]{{\rm itm}}
\newcommand{\supp}[0]{{\rm supp}}
\newcommand{\irr}[0]{{\rm irr}}

\newcommand{\bxi}[0]{\boldsymbol\xi}
\newcommand{\bpsi}[0]{\boldsymbol\psi}
\newcommand{\tX}[0]{\tilde{X}}

\newcommand{\Ss}[0]{{\bf s}}

\newcommand{\E}[0]{{\mathbb{E}}}

\begin{document}

\title{Stability for Maximal Independent Sets}

\author{Jeff Kahn \and Jinyoung Park}
\thanks{The authors are supported by the National Science Foundation under Grant Award DMS1501962}
\thanks{JK is supported by a Simons Fellowship.}
\email{jkahn@math.rutgers.edu, jp1324@math.rutgers.edu}
\address{Department of Mathematics, Rutgers University \\
Hill Center for the Mathematical Sciences \\
110 Frelinghuysen Rd.\\
Piscataway, NJ 08854-8019, USA}

\begin{abstract}
Answering questions of Y. Rabinovich, we prove ``stability" versions of 
upper bounds on maximal independent set counts in graphs under various 
restrictions.  Roughly these say that being close to the maximum 
implies existence of a large induced matching or triangle matching
(depending on assumptions).
\end{abstract}

\maketitle


\section{Introduction}

Denote the number of maximal independent sets in a graph $G$ by $\mis(G)$. We recall two well-known bounds for these numbers:

\begin{thm}[Moon-Moser \cite{MM}] \label{thm:mm}
For any $n$-vertex graph $G$,
\[ \mis(G) \le 3^{n/3},\]
with equality iff $G$ is the disjoint union of $n/3$ triangles.
\end{thm}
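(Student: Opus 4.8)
The plan is to prove the bound and the equality characterization simultaneously, by induction on $n := |V(G)|$. The cases $n \le 3$ are a finite check: $\mis(G) \le 3^{n/3}$ holds in each, and since $3^{n/3} \notin \Z$ unless $3 \mid n$, equality is possible only when $3 \mid n$, and for $n = 3$ one sees directly that equality forces $G = K_3$. For the inductive step I would use two elementary facts. First, if $v$ is any vertex, then every maximal independent set meets $N[v]$ (otherwise it could be enlarged by $v$). Second, for any vertex $w$ the map $I \mapsto I \setminus \{w\}$ is a bijection from the maximal independent sets of $G$ containing $w$ onto those of $G - N[w]$; hence the number of the former equals $\mis(G - N[w])$, which by the inductive hypothesis applied to this smaller graph is at most $3^{(n-1-d(w))/3}$.

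Now fix a vertex $v$ of minimum degree $\delta$. Combining the two facts,
\[ \mis(G) \;\le\; \sum_{w \in N[v]} \mis(G - N[w]) \;\le\; (\delta+1)\,3^{(n-\delta-1)/3} \;=\; 3^{n/3}\cdot\frac{\delta+1}{3^{(\delta+1)/3}}, \]
using $|N[w]| \ge \delta+1$ for every $w \in N[v]$. An easy check --- at the integers $\delta+1\in\{1,2\}$, and for $\delta+1 \ge 4$ by convexity of $3^{x/3}$ together with $\ln 3 > 1$ --- shows $\delta+1 < 3^{(\delta+1)/3}$ whenever $\delta \ne 2$, so in that case $\mis(G) < 3^{n/3}$ strictly and no extremal graph occurs. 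There remains $\delta = 2$; write $N(v) = \{u_1,u_2\}$. The displayed inequality then reads $\mis(G) \le 3^{n/3}$ exactly (since $3 = 3^{3/3}$), and I would chase equality through the chain. Equality in the first step requires that no maximal independent set contain two vertices of $\{v,u_1,u_2\}$; since $v \sim u_1$ and $v \sim u_2$, the only pair that could occur together is $\{u_1,u_2\}$, so this forces $u_1 \sim u_2$ (otherwise $\{u_1,u_2\}$ extends to a maximal independent set, counted twice, precluding equality). Equality further forces $|N[u_1]| = |N[u_2]| = 3$, hence $N[v] = N[u_1] = N[u_2] = \{v,u_1,u_2\} =: T$, so the triangle on $T$ is a connected component of $G$; and it forces $\mis(G-T) = 3^{(n-3)/3}$, so by the inductive characterization $G - T$ is a disjoint union of triangles. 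Therefore $G$ is a disjoint union of triangles. The converse is immediate: in a disjoint union of $n/3$ triangles the maximal independent sets are exactly the sets choosing one vertex from each triangle, of which there are $3^{n/3}$.

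The argument is largely bookkeeping, so there is no single hard step; the one point that needs care is the equality analysis when $\delta = 2$ --- splitting on whether $u_1 \sim u_2$, ruling out the non-adjacent subcase via the resulting over-count, and extracting the ``isolated triangle'' structure from the chain of equalities before feeding $G - T$ back into the induction. Everything else reduces to the finite base-case verification and the one-variable inequality $m < 3^{m/3}$, valid for every positive integer $m \ne 3$.
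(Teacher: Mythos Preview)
The paper does not prove this statement: Theorem~\ref{thm:mm} is quoted from Moon and Moser \cite{MM} as background and is used as a black box (e.g.\ in the proof of Lemma~\ref{lem:nobigt}). So there is no ``paper's proof'' to compare against.

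Your argument is correct and is essentially the standard inductive proof. The two ingredients --- that every maximal independent set meets $N[v]$, and that $I\mapsto I\setminus\{w\}$ is a bijection between maximal independent sets of $G$ containing $w$ and maximal independent sets of $G-N[w]$ --- are stated precisely and used correctly. The one-variable inequality $m<3^{m/3}$ for positive integers $m\ne 3$ is easy, and your equality analysis at $\delta=2$ is clean: ruling out $u_1\not\sim u_2$ via the double-count, then reading off $d(u_1)=d(u_2)=2$ from equality in the degree bound, which isolates the triangle $T$ and feeds $G-T$ back into the induction. Nothing is missing.
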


\begin{thm}[Hujter-Tuza \cite{HT}]\label{thm:ht}
For any $n$-vertex, triangle-free graph $G$,
\[\mis(G) \le 2^{n/2},\]
with equality iff $G$ is a perfect matching.
\end{thm}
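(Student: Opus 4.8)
The plan is to prove $\mis(G)\le 2^{n/2}$ by induction on $n$, extracting the equality case from the inductive step; the base case $n\le 1$ is trivial. The engine is the usual deletion recursion. For any triangle-free graph $H$ and vertex $w$, the map $I\mapsto I\sm\{w\}$ is a bijection from the maximal independent sets of $H$ containing $w$ to those of $H-N[w]$, so the number of maximal independent sets of $H$ containing $w$ equals $\mis(H-N[w])$; note that $H-N[w]$ is again triangle-free, on $n-\deg w-1$ vertices. Choosing a vertex $v$ of $G$ of minimum degree $\delta$, and using that every maximal independent set of $G$ either contains $v$ or (by maximality) contains a neighbor of $v$, we get
\[
\mis(G)\ \le\ \mis(G-N[v])+\sum_{u\in N(v)}\mis(G-N[u])\ \le\ (\delta+1)\,2^{(n-\delta-1)/2},
\]
where the second inequality uses the inductive hypothesis together with $|N[u]|\ge\delta+1$. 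Since $(\delta+1)2^{-(\delta+1)/2}\le 1$ precisely when $\delta\ne 2$, with equality only at $\delta=1$ and $\delta=3$, this already handles $\delta\ge 4$ (strictly) and leaves $\delta\le 3$.

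The small-degree cases I would handle directly. If $v$ is isolated, $\mis(G)=\mis(G-v)\le 2^{(n-1)/2}<2^{n/2}$. If $\delta=1$ and $v$'s neighbor $u$ also has degree $1$, then $uv$ spans a component, $\mis(G)=2\,\mis(G-\{u,v\})$, and the inductive hypothesis gives $\mis(G)\le 2^{n/2}$ with equality iff $G-\{u,v\}$ is a perfect matching --- this is the only way equality occurs. If $\delta=1$ but $\deg u\ge 2$, the recursion yields $\mis(G)\le 2^{(n-2)/2}+2^{(n-3)/2}<2^{n/2}$. For $\delta=3$ the slack sits in the union bound: $N(v)$ is independent (triangle-freeness), so two of its vertices lie in a common maximal independent set, forcing $|\{I:v\notin I\}|<\sum_{u\in N(v)}\mis(G-N[u])$ and hence $\mis(G)<2^{n/2}$.

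The case $\delta=2$ is the genuine obstacle: here the recursion gives only $3\cdot 2^{(n-3)/2}>2^{n/2}$, so we must use the local structure at a degree-$2$ vertex $v$ with $N(v)=\{x,y\}$ (so $x\not\sim y$). Partitioning the maximal independent sets of $G$ into those containing $v$, those containing $x$ (which forces $v\notin I$), and those containing $y$ but neither $v$ nor $x$, and applying the bijection above to each part, gives the exact identity
\[
\mis(G)=\mis(G-N[v])+\mis(G-N[x])+m,
\]
where $m$ is the number of maximal independent sets of $G-N[y]$ that avoid $x$ (and $x\notin N[y]$ since $x\not\sim y$). If $\deg x\ge 3$ or $\deg y\ge 3$, one of the graphs $G-N[x]$, $G-N[y]$ has at most $n-4$ vertices, and the three terms sum to at most $2\cdot 2^{(n-3)/2}+2^{(n-4)/2}<2^{n/2}$. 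Otherwise $\deg x=\deg y=2$; let $x'$ be the neighbor of $x$ other than $v$ and set $H:=G-N[y]$. In $H$ the vertex $x$ has degree $0$ or $1$: if $x'$ is the neighbor of $y$ other than $v$, then $x$ is isolated in $H$, every maximal independent set of $H$ contains $x$, so $m=0$ and $\mis(G)\le 2^{(n-3)/2}+2^{(n-3)/2}<2^{n/2}$; otherwise $x$ has the unique neighbor $x'$ in $H$, so a maximal independent set of $H$ avoids $x$ iff it contains $x'$, whence $m\le \mis(H-N_H[x'])\le 2^{(n-5)/2}$ (since $\{x,x'\}\subseteq N_H[x']$ and $|V(H)|=n-3$), so again $\mis(G)<2^{n/2}$. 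Combining the cases, $\mis(G)\le 2^{n/2}$ always, and equality forces $G$ to have an isolated-edge component whose deletion leaves a perfect matching --- that is, $G$ is itself a perfect matching --- completing the induction. The degree-$2$ analysis is where I expect essentially all the work to lie; the point to get right is the exhaustive but clean treatment of the configuration around $v$, $x$, $y$, $x'$.
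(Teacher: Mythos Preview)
The paper does not prove this theorem; it is quoted (with attribution to Hujter and Tuza) as a known background result, so there is no argument in the paper to compare against.

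Your inductive proof is correct and is essentially the standard one. The deletion bijection between maximal independent sets of $G$ containing $w$ and maximal independent sets of $G-N[w]$ is valid; the case split on the minimum degree $\delta$ is complete; and the $\delta=2$ subcase---the only one where the crude bound $(\delta+1)2^{(n-\delta-1)/2}$ exceeds $2^{n/2}$---is handled correctly by the exact three-term identity $\mis(G)=\mis(G-N[v])+\mis(G-N[x])+m$ and the local analysis at $v,x,y,x'$. The equality characterization also emerges properly, since every branch of the analysis other than the isolated-edge component yields a strict inequality.
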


As usual $M$ is an \textit{induced matching} of $G$ if it is an induced subgraph of $G$ that is a matching. Similarly, $T$ is an \textit{induced triangle matching} of $G$ if it is an induced subgraph of $G$ that is a vertex disjoint union of triangles.

Write $\itm(G)$ for the number of triangles in a largest induced triangle matching in $G$, and $\im(G)$ for the number of edges in a largest induced matching.

In what follows we will usually prefer to work with $\log\mis$ ($\log=\log_2$), 
thought of as the number of bits needed to specify a maximal independent set. 
Note that
$\itm(G)\log 3$ and $\im(G)$ are obvious lower bounds on $\log\mis(G)$.
We will be interested in questions suggested to us a few years ago by 
Yuri Rabinovich \cite{Yuri} concerning ``stability" aspects of upper bounds on mis,
meaning, roughly: does large mis imply existence of a large induced triangle matching 
or large induced matching (as appropriate)?
Formally, his conjectures were unquantified versions of the following three statements,
whose proofs are the content of the present work.
(The questions were motivated by \cite{RTV}, which includes a proof of Theorem~\ref{thm:im} for
bipartite graphs.)

\begin{thm}\label{thm:itm}
For any $\eps>0$, there is a $\gd=\gd(\eps)=\gO(\eps)$ such that for an $n$-vertex graph $G$, if $\itm(G) < (1-\eps)\frac{n}{3}$ then $\log\mis(G)<(\frac{1}{3}\log3-\gd)n$.
\end{thm}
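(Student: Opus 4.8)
The plan is to deduce Theorem~\ref{thm:itm} from the following sharp ``weighted'' bound interpolating between Theorems~\ref{thm:mm} and~\ref{thm:ht}: for every $n$-vertex graph $G$,
\beq{eq:itmbd}
\log\mis(G)\;\le\;\tfrac n2+\Bigl(\log3-\tfrac32\Bigr)\itm(G),\qquad\text{equivalently}\qquad \mis(G)\le 2^{n/2}\Bigl(\tfrac{3}{2\sqrt2}\Bigr)^{\itm(G)}.
\enq
Since $\log3-\tfrac32>0$, the hypothesis $\itm(G)<(1-\eps)\tfrac n3$ turns \eqref{eq:itmbd} into $\log\mis(G)<\tfrac n2+(1-\eps)\tfrac n3(\log3-\tfrac32)=(\tfrac13\log3)n-\eps(\tfrac13\log3-\tfrac12)n$, giving Theorem~\ref{thm:itm} with $\gd=(\tfrac13\log3-\tfrac12)\eps=\tfrac{2\log3-3}{6}\eps=\gO(\eps)$. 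Note that equality in \eqref{eq:itmbd} holds exactly for disjoint unions of triangles and single edges, so no slack is available in those configurations and the induction has to be set up with care.

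I would prove \eqref{eq:itmbd} by induction on $n$ (the case $n=0$ trivial), writing $r=3/(2\sqrt2)>1$. Since $\mis$ is multiplicative and $\itm$ additive over connected components, we may assume $G$ is connected; and if $G$ is triangle-free then $\itm(G)=0$ and \eqref{eq:itmbd} is exactly Theorem~\ref{thm:ht}, so we may assume $G$ is connected and contains a triangle (hence $\itm(G)\ge1$). For any vertex $v$, every maximal independent set meets $N[v]$, and for each $w$ the maximal independent sets containing $w$ are in bijection with those of $G-N[w]$, whence the Moon--Moser recursion
\beq{eq:mmrec}
\mis(G)\le\sum_{w\in N[v]}\mis(G-N[w]).
\enq
I would apply \eqref{eq:mmrec} at a well-chosen $v$ and bound each term by induction, using $|N[w]|\ge d(G)+1$ when $w$ has minimum degree and the monotonicity $\itm(G-N[w])\le\itm(G)$ (so $r^{\itm(G-N[w])}\le r^{\itm(G)}$). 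The argument splits on the minimum degree $d=d(G)$: if $d\ge3$, take $v$ of degree $d$, so every $w\in N[v]$ has $|N[w]|\ge d+1\ge4$ and \eqref{eq:mmrec} yields $\mis(G)\le(d+1)2^{-(d+1)/2}2^{n/2}r^{\itm(G)}\le2^{n/2}r^{\itm(G)}$ (as $d+1\le2^{(d+1)/2}$ for $d\ge3$); if $d=1$, with $v$ of degree $1$ and neighbor $u$, \eqref{eq:mmrec} gives $\mis(G)\le\mis(G-\{u,v\})+\mis(G-N[u])\le2\cdot2^{(n-2)/2}r^{\itm(G)}=2^{n/2}r^{\itm(G)}$. (The case $d=0$ cannot occur for connected $G$ with a triangle.)

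The crux is $d=2$, where I would argue as follows. \emph{Case (a):} some degree-$2$ vertex $v$ has a neighbor $x$ with $\deg x\ge3$. Writing $N(v)=\{a,x\}$, we have $|N[v]|=3$, $|N[a]|\ge3$, $|N[x]|\ge4$, so \eqref{eq:mmrec} gives
\beq{eq:deg2}
\mis(G)\le\bigl(2^{(n-3)/2}+2^{(n-3)/2}+2^{(n-4)/2}\bigr)r^{\itm(G)}=\bigl(2+\tfrac1{\sqrt2}\bigr)2^{(n-3)/2}r^{\itm(G)}<2^{3/2}2^{(n-3)/2}r^{\itm(G)}=2^{n/2}r^{\itm(G)}.
\enq
\emph{Case (b):} every degree-$2$ vertex has both neighbors of degree $2$. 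Then the degree-$2$ vertices span a $2$-regular subgraph whose cycles are, since their vertices have no further neighbors, connected components of $G$; as $G$ is connected and contains a triangle this forces $G=C_3$, and then $\mis(G)=3=2^{3/2}r=2^{n/2}r^{\itm(G)}$. This completes the induction.

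The main obstacle is precisely this $d=2$ step. A blind application of \eqref{eq:mmrec} to an arbitrary degree-$2$ vertex gives only $\mis(G)\le3\cdot2^{(n-3)/2}r^{\itm(G)}$, and $3>2^{3/2}$, so it loses; the real content is that one can always \emph{either} locate a degree-$2$ vertex adjacent to a vertex of strictly larger degree --- which supplies the decisive factor $2^{-1/2}$ in \eqref{eq:deg2} --- \emph{or} recognize that the degree-$2$ vertices have sealed themselves into cycle components, whereupon connectivity pins $G$ down. The other conceptual step is guessing the correct intermediate weight $2^{n/2}r^{\itm(G)}$, with $r=3/(2\sqrt2)$ forced by the triangle component; given that, the remaining verifications --- including the numerical inequality $2+2^{-1/2}<2^{3/2}$ --- are routine.
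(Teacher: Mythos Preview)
Your argument is correct, and it takes a genuinely different route from the paper's.  The paper never proves (or states) the interpolation inequality \eqref{eq:itmbd}; instead it runs a greedy ``peeling'' algorithm that records, for each successively chosen maximum-degree vertex $x_i$, the bit $\xi_i=\mathbf 1_{\{x_i\in I\}}$, stops once the residual graph has maximum degree at most $2$, and then splits the count three ways according to whether the run length $t$ is large, the triangle-free part $R$ of the residual graph is large, or both are small (the last case being where the hypothesis on $\itm(G)$ bites).  Your approach replaces all of this by a single induction via the Moon--Moser recursion $\mis(G)\le\sum_{w\in N[v]}\mis(G-N[w])$, with Hujter--Tuza handling the triangle-free branch; the delicate point, which you identify correctly, is the $d=2$ case, resolved by locating a degree-$2$ vertex with a higher-degree neighbor (or else recognizing $G=C_3$).

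What your approach buys: it is shorter and more elementary, the intermediate bound \eqref{eq:itmbd} is sharp (attained by any disjoint union of triangles and edges) and of independent interest, and the resulting constant $\gd=(\tfrac13\log3-\tfrac12)\eps$ is exactly the paper's parameter $\beta$ --- but without the factor-$4$ loss the paper incurs, so your $\gd$ is four times larger.  What the paper's approach buys: the algorithmic framework is reusable, and essentially the same machinery (with F\"uredi's path/cycle bounds replacing the triangle count) proves Theorem~\ref{thm:im} and, with further modifications, Theorem~\ref{thm:unbal}.  It is not obvious how to adapt your induction to those settings, since one would need analogous sharp interpolations with $\im(G)$ in place of $\itm(G)$.
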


\begin{thm}\label{thm:im}
For any $\eps>0$, there is a $\gd=\gd(\eps)=\gO(\eps)$ such that for a triangle-free $n$-vertex graph $G$, if $\im(G) < (1-\eps)\frac{n}{2}$ then $\log\mis(G)<(\frac{1}{2}-\gd)n$.
\end{thm}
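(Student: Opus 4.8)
The plan is to run an entropy/counting argument that "charges" the specification of a random maximal independent set (MIS) $I$ of $G$ to a structure that, in the absence of a large induced matching, cannot cost a full $n/2$ bits. Concretely, fix a uniform random MIS $I$ and reveal it vertex by vertex along a carefully chosen order. At each step we expose whether a vertex lies in $I$; the point of the Hujter--Tuza bound is that each edge of a matching contributes at most one "free" bit. So first I would set up the identity/inequality $\log\mis(G)=H(I)\le \sum_v H(X_v\mid X_{<v})$ where $X_v=\mathbf 1\{v\in I\}$ and the order is adaptive. The natural order is the greedy one underlying Hujter--Tuza: repeatedly pick an edge $uv$, branch on the (at most $3$) possibilities for $I\cap\{u,v\}$ when $u\sim v$ (but maximality rules out $I\cap\{u,v\}=\emptyset$ once both have no other neighbor chosen, giving the factor $2$), delete $N[\{u,v\}]$, and recurse; leftover isolated vertices are forced. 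This already recovers $\log\mis(G)\le n/2$; the stability task is to find $\gO(\eps n)$ worth of entropy deficiency when $\im(G)<(1-\eps)n/2$.

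The key steps, in order. (1) Build a maximal \emph{induced} matching $M$ greedily: pick any edge $e_1$, delete its closed neighborhood, repeat. If $|M|=m$ then by hypothesis we may assume $m<(1-\eps)n/2$ (else we are done). Since $M$ is a maximal induced matching, every vertex outside $V(M)$ either lies in $\bigcup_{e\in M}N(e)$ or... actually more carefully, maximality of the \emph{induced} matching means: for every edge $f$ of $G$ disjoint from $V(M)$, adding $f$ would create a chord, so $f$ has an endpoint adjacent to $V(M)$; equivalently the graph induced on $W:=V(G)\sm N[V(M)]$ is \emph{empty} — wait, that's for a maximal induced matching only up to edges meeting $V(M)$. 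The correct statement to extract is: $W:=V(G)\sm (V(M)\cup N(V(M)))$ induces no edge, so $W$ is an independent set of size $|W|=n-|V(M)\cup N(V(M))|$. Thus $n = 2m + |N(V(M))\sm V(M)| + |W|$, and if $|W|$ is large ($\gO(\eps n)$) we win easily, because each vertex of $W$ is forced into $I$ or excluded by a neighbor, i.e. contributes a strictly sub-$(1/2)$ average cost — more precisely $W$ together with $V(M)$ can be specified in $<\frac n2$ bits since $|W|$ vertices beyond a matching are "cheap". (2) The remaining case is $|N(V(M))\sm V(M)|=\gO(\eps n)$, i.e. $V(M)$ dominates a large set. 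Here the idea is that a vertex $w$ with a neighbor $u\in V(M)$ is not free: conditioned on the status of $u$ (and of $u$'s partner $u'$ in $M$), the pair contributes at most... one shows that a triangle or near-triangle structure ($w,u,u'$ with $w\sim u$) lets us replace the "$2$ choices per matching edge" by "$3$ choices per two matching edges plus an extra vertex", which is $\le (\log 3)/(\cdots)$, strictly below $1/2$ per vertex since $G$ is triangle-free forces extra constraints.

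(3) Quantitatively, the clean way is an entropy lemma: order the vertices as $u_1,u_1',u_2,u_2',\dots$ (matching edges first) and then the rest. For each matching edge $e_i=u_iu_i'$, $H(X_{u_i},X_{u_i'}\mid\text{past})\le 1$ by triangle-freeness/maximality (the Hujter--Tuza local bound). For a "charged" vertex $w\in N(V(M))$, pick a neighbor $u_i\in V(M)$; I claim $H(X_w\mid X_{u_i},\text{past})\le \eta<1/2$ on average, because on the event $X_{u_i}=1$ (probability, say, $p$) $w$ is forced to $0$, contributing $0$ bits, and otherwise it contributes $\le 1$; so the per-vertex cost of $w$ is $\le 1-p$. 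The remaining, genuinely delicate point — the \textbf{main obstacle} — is that $p$ (the conditional probability that a matching endpoint is \emph{in} $I$) could be tiny, so the naive bound $1-p$ is useless. Overcoming this requires the standard fix: if $\Pr[u_i\in I]$ is small for many $i$, then $u_i'\in I$ with good probability (by maximality, at least one of $u_i,u_i'$ has a neighbor in $I$, but also often $u_i'$ itself must be in $I$), so one instead charges $w$-vertices to the endpoint of $M$ that is likely to be in $I$, OR one runs the argument with a carefully weighted average over matching edges. I would handle this by a case split on $\E\sum_i X_{u_i}+X_{u_i'}$: if this is $\gO(\eps n)$ then those "in-$I$" matching vertices kill many neighbors' entropy; if it is $o(n)$ then $I$ is mostly supported outside $V(M)$, but then the $2m$ vertices of $V(M)$ collectively carry entropy $\ll m$ (they're mostly $0$), and since $2m<(1-\eps)n$ already, total entropy $<n/2-\gO(\eps n)$. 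Assembling the constants so that $\gd=\gO(\eps)$ falls out is then routine bookkeeping; the structural dichotomy above is where all the content lies, and matching the paper's framework (induced matching vs.\ induced triangle matching, cf.\ Theorem~\ref{thm:itm}) suggests that the authors likely isolate a single "defect form of Hujter--Tuza" lemma and feed it the greedy induced matching.
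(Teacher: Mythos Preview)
Your sketch has a genuine gap, and the approach as written does not close.

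The core problem is in step~(3) and the final case split. You assert that for each edge $u_iu_i'$ of your fixed maximal induced matching $M$ one has $H(X_{u_i},X_{u_i'}\mid\text{past})\le 1$ ``by triangle-freeness/maximality (the Hujter--Tuza local bound)''. But that bound holds only when the edge is \emph{isolated} in the graph under consideration: then maximality forces $I\cap\{u_i,u_i'\}\neq\emptyset$, leaving two options. For an edge of an induced matching sitting inside $G$, both endpoints typically have further neighbours, so $(X_{u_i},X_{u_i'})=(0,0)$ is perfectly possible and the local entropy is only $\le\log 3$. This breaks the baseline $H(I)\le n/2$ in your framework, and with it the whole accounting. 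Your Case~B then fails outright: even if $H(I\cap V(M))=o(n)$, nothing you have written bounds $H(I\setminus V(M)\mid I\cap V(M))$ by anything like $(n-2m)/2$, since $I\setminus V(M)$ is \emph{not} a maximal independent set of $G[V\setminus V(M)]$ and Hujter--Tuza does not apply to it. So ``total entropy $<n/2-\Omega(\eps n)$'' is unsupported. Case~A has the analogous defect: a large \emph{global} expectation $\E\sum_i(X_{u_i}+X_{u_i'})$ does not let you charge a specific $w\in N(V(M))$ to a specific neighbour $u_i$ that is likely to be in $I$.

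The paper avoids all of this by \emph{not} fixing a matching in advance. It runs an adaptive procedure on $I$: repeatedly take the current highest-degree vertex $x$, record the bit $\mathbf 1_{\{x\in I\}}$, and if $x\in I$ delete $N[x]$; stop once the residual graph $G^*=G^*(I)$ has maximum degree at most~$2$. Three separate counts then finish the job. First, few $I$'s have a long run $t(I)$, because each step removes either one vertex or at least four, and the combinatorics of the bit-string beats the Hujter--Tuza bound on $G^*$. Second (and this is the ingredient your sketch is missing entirely), $G^*$ is a disjoint union of isolated edges and a residual $R$ consisting of paths and cycles; F\"uredi's bounds $\mis(P_k),\mis(C_k)\le O(\gamma^k)$ with $\gamma\approx 1.325<\sqrt 2$ show that few $I$'s have $|V(R)|$ large. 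Third, for the remaining $I$'s, $G^*$ is almost entirely isolated edges --- and \emph{those} edges genuinely satisfy the ``one bit per edge'' bound --- but their number is at most $\im(G)<(1-\eps)n/2$, which gives the $\Omega(\eps n)$ saving directly. The reduction to maximum degree~$2$ is precisely what manufactures the isolated-edge structure your entropy bound needs; without it, the Hujter--Tuza local inequality is simply false.
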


Theorem \ref{thm:im} applies to bipartite graphs, of course. If $G$ is bipartite with bipartition $X \cup Y$, then $\log\mis(G)$ is trivially at most $\min\{|X|,|Y|\}$ (since a maximal independent set is determined by its intersection with either of $X$, $Y$); so the statement is uninteresting unless $G$ is close to balanced. But Rabinovich asked whether something analogous also holds for unbalanced (bipartite) $G$; more precisely, whether something along the following lines is true.

\begin{thm}\label{thm:unbal}
For any $\eps>0$, there is a $\gd=\gd(\eps)=2^{-O(1/\eps)}$ such that for a bipartite graph G on $X \cup Y$ with $|X|=n$ and $|Y|=2n$, if $\im(G) < (1-\eps)n$ then $\log\mis(G)<(1-\gd)n$.
\end{thm}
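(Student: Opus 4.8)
The plan is to prove the contrapositive: assuming $\log\mis(G)\ge(1-\gd)n$, build an induced matching of size at least $(1-\eps)n$, where $\gd=2^{-C/\eps}$ for a suitable absolute constant $C$. As in the bipartite discussion above, identify a maximal independent set $I$ with $A:=I\cap X$; then $I\cap Y=Y\sm N(A)$, and $A$ is \emph{closed} in the sense that every $x\in X\sm A$ has a neighbour in $Y\sm N(A)$ (equivalently $N(x)\not\sub N(A)$), and every closed $A$ arises this way. Write $\cF\sub 2^{X}$ for the family of closed sets, so $|\cF|=\mis(G)\ge 2^{(1-\gd)n}$; we must produce an induced matching on at least $(1-\eps)n$ vertices of $X$.

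The combinatorial engine is the following. Call $Z\sub A$ a \emph{generator} of $A\in\cF$ if $N(Z)=N(A)$, and let $g(A)$ be the least size of a generator. If $Z$ is a minimal generator of $A$ then, by minimality, each $x\in Z$ has a neighbour $y_x$ with $N(y_x)\cap Z=\{x\}$; hence the $y_x$ are distinct and $\{x\,y_x:x\in Z\}$ is an induced matching of size $g(A)$. Thus $\im(G)\ge\max_{A\in\cF}g(A)$. Applied only to $A=X$ (which always lies in $\cF$) this says merely that $\im(G)$ is at least the least number of vertices of $X$ meeting every non-isolated vertex of $Y$, which is far too weak --- one can have $\mis(G)=2^{n-o(n)}$ while $g(X)=o(n)$. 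So the whole family $\cF$, and not just the set $X$, must be exploited, and the way to do that is to iterate.

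The iteration peels $G$ in rounds. At round $t$ one has an induced subgraph $G_t$ of $G$ on $X_t\cup Y_t$ with $X_t\sub X$; one takes a minimal generator $Z_t$ of $X_t$ in $G_t$ (yielding $|Z_t|$ induced-matching edges inside $G_t$, hence inside $G$) and passes to $G_{t+1}=G_t\bigl[(X_t\sm Z_t)\cup Y_t'\bigr]$ for a carefully chosen $Y_t'\sub N_{G_t}(Z_t)$ (chosen so that the vertices of $X_t\sm Z_t$ do not become isolated). Two things must be arranged: a counting inequality controlling $\mis(G_t)$ in terms of $\mis(G_{t+1})$ and the factor $2^{|Z_t|}$, so that control of $\mis(G)$ propagates down the chain; and the fact that the matchings produced in the separate rounds amalgamate into a single induced matching of $G$. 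If after $O(1/\eps)$ rounds all but an $\eps$-fraction of $X$ has been peeled, then $\sum_t|Z_t|\ge(1-\eps)n$ is the matching we want, contradicting $\im(G)<(1-\eps)n$; feeding the same information through the counting inequality instead gives $\log\mis(G)<(1-\gd)n$.

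I expect the amalgamation to be the crux, and the reason the bound is only $\gd=2^{-O(1/\eps)}$ rather than $\gd=\gO(\eps)$. The difficulty is that $y_x$, chosen at round $t$, is private only \emph{within} $Z_t$: it may be adjacent to vertices of $X_{t+1},X_{t+2},\dots$, so the edges of a later round can destroy the inducedness of an earlier one. Fixing this appears to cost, at each round, the discarding of a $\Theta(\eps)$-fraction of the current vertices to protect what has already been committed; each round then multiplies the surviving fraction of useful structure by a constant factor, surviving $O(1/\eps)$ rounds forces one to start with a margin exponentially small in $1/\eps$, and it is the bookkeeping of these compounding losses --- rather than any single step --- that I anticipate as the main obstacle.
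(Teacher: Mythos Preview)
Your proposal is a plan, not a proof, and you say as much: the two load-bearing steps---the ``counting inequality controlling $\mis(G_t)$ in terms of $\mis(G_{t+1})$ and $2^{|Z_t|}$'' and the amalgamation of the round-by-round matchings into a single induced matching of $G$---are both left open, the second explicitly flagged as ``the main obstacle.'' Neither step is routine. For the counting inequality, note that a closed set $A\subseteq X_t$ is \emph{not} determined by $A\cap Z_t$ together with a closed set of $G_{t+1}$: the set $A\setminus Z_t$ need not be closed in $G_{t+1}$ (vertices of $X_{t+1}\setminus A$ may have had their only ``private'' $Y$-neighbours covered by $A\cap Z_t$), so there is no obvious map $\cF(G_t)\to 2^{Z_t}\times\cF(G_{t+1})$. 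For the amalgamation, the private witnesses $y_x$ chosen at round $t$ are private only relative to $Z_t$; they may be adjacent to vertices placed into later $Z_{t'}$, and you give no mechanism for protecting them. Your ``discard a $\Theta(\eps)$-fraction per round'' idea is a heuristic, not an argument: you have not specified what is discarded, why the discard suffices to restore inducedness, or why the counting survives the discard.

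The paper's proof goes a different way. It also passes to irredundant subsets of $X$ (your minimal generators), but then, rather than peeling generators, it runs a greedy algorithm on a random irredundant $\II$: process vertices of $X$ in decreasing $Y$-degree, deleting $N(x)$ from $Y$ whenever $x\in\II$, and stop once every remaining $x$ has $Y$-degree below $M=12/\eps$. The run is encoded by a binary string $\bxi$ of length $\TT$, and $\II$ is recovered from $(\bxi,\bpsi)$ with $\bpsi=\II\cap X^*$. One then bounds $H(\II)=H(\bxi)+H(\bpsi\mid\bxi)$. If $\TT$ is large (at least $\eps n/2$), then since each ``1'' in $\bxi$ kills at least $M$ vertices of $Y$ and $|Y|=2n$, the support of $\bxi$ has size at most $\eps n/6$, so $H(\bxi\mid\TT)\le H(1/3)\TT$, a linear saving. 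If $\TT$ is small, then in the bounded-degree graph $G^*$ at least $\eps n/2$ vertices of $X^*$ have their $Y^*$-neighbourhood covered by fewer than $M$ other vertices of $X^*$; for each such $x$ the set $W_x$ of size at most $M$ cannot be entirely contained in an irredundant $\II$, and a fractional cover by the $W_x$'s together with Shearer's Lemma converts this into a saving of $2^{-O(M)}=2^{-O(1/\eps)}$ per such vertex in $H(\bpsi\mid\bxi)$. This is where the exponential dependence on $1/\eps$ actually enters---not from an iteration, but from the size of the Shearer blocks.
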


\noindent
The proof of this is easily adapted to $|Y|=Bn$ (with $\delta$ then $\delta(\eps, B))$, but to keep things simple we just state the result for $B=2$.

Rabinovich suspected that, as in Theorems \ref{thm:itm} and \ref{thm:im}, $\delta(\eps)$ should be linear in $\eps$, but this is not true. In fact, Theorem \ref{thm:unbal} is tight (up to the implied constant); a construction to show this will be given in Section \ref{sec:tightness}.

\medskip
The rest of the paper is organized as follows. Section \ref{sec:prelim} recalls some background, in particular F\"uredi's upper bounds on mis for paths and cycles, and Shearer's entropy lemma. Section \ref{sec:itm,im pf} gives the proofs of Theorems \ref{thm:itm} and \ref{thm:im}. The proof of Theorem \ref{thm:unbal} and the example to show its tightness are given in Section \ref{sec:unbal pf}.

The proofs of Theorems \ref{thm:itm} and \ref{thm:im} are similar, while that of Theorem \ref{thm:unbal} is related but somewhat trickier. The general approach has its roots in an idea for counting (ordinary) independent sets due to A.A. Sapozhenko \cite{Sap2001}, \cite{Sap2007}.

Strictly speaking we prove the theorems only for sufficiently large $n$, since we occasionally hide minor terms in $o(1)$'s. Of course combined with the characterizations of equality in Theorems \ref{thm:mm} and \ref{thm:ht} this does give the stated versions, though the $\delta$'s we produce may not be valid for small $n$. Since we are really interested in large $n$ anyway, this approach seems preferable to carrying explicit error terms.

\mn
\textit{Notation.} We use ``$\sim$'' for adjacency, $N(x)$ for the neighborhood of $x$, $N(S)=\cup_{x \in S} N(x)$, and $d_S(x)=|N(x) \cap S|$. As usual, $G[S]$ is the subgraph of $G$ induced by $S \sub V(G)$.

\section{Preliminaries}\label{sec:prelim}

For the proof of Theorem \ref{thm:im} we need the following upper bounds on mis for paths and cycles, given by Z. F\"uredi \cite{Furedi}.

\begin{prop}\label{prop:Furedi}
Let $\gamma \; (\approx 1.325)$ be the unique real solution of the equation $1+\gamma=\gamma^3$.
\begin{enumerate}
\item For $P_n$, the path with $n$ vertices,
\[
\mis(P_n) \le 2 \gamma^{n-2}.
\]

\item For $C_n$, the cycle with $n$ vertices,
\[
\mis(C_n) \le 3 \gamma^{n-3}.
\]
\end{enumerate}
\end{prop}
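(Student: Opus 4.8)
The plan is to reduce both bounds to linear recurrences by a vertex-deletion argument and then close the induction using the defining identity $1+\gamma=\gamma^{3}$, i.e.\ $\gamma^{k+3}=\gamma^{k+1}+\gamma^{k}$.

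\emph{The path.} Write $p_n=\mis(P_n)$ and let $v_1\dots v_n$ be the vertices of $P_n$ in order. I would condition on whether $v_1$ lies in a given maximal independent set $I$. If $v_1\in I$ then $v_2\notin I$, and $I\cap\{v_3\dots v_n\}$ is a maximal independent set of the subpath on $\{v_3\dots v_n\}$ (the only domination issue, at $v_3$, is moot since $v_2\notin I$); if $v_1\notin I$ then maximality forces $v_2\in I$, hence $v_3\notin I$, and $I\cap\{v_4\dots v_n\}$ is a maximal independent set of the subpath on $\{v_4\dots v_n\}$. These cases are disjoint and exhaustive, so $p_n=p_{n-2}+p_{n-3}$ for all $n$ past a small threshold. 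Together with the base values $p_1=1,\ p_2=2,\ p_3=2$ and $\gamma>1$, the bound $p_n\le 2\gamma^{n-2}$ follows by strong induction: $p_n=p_{n-2}+p_{n-3}\le 2\gamma^{n-4}+2\gamma^{n-5}=2\gamma^{n-5}(\gamma+1)=2\gamma^{n-2}$ (with $p_2=2\gamma^{0}$ the equality case).

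\emph{The cycle.} Write $c_n=\mis(C_n)$ and fix a vertex $v$ of $C_n$ with neighbours $x,y$ and with $x',y'$ the other neighbours of $x,y$ (so that $x',x,v,y,y'$ are consecutive on $C_n$). For a maximal independent set $I$ I would split into three cases. (A) $v\in I$: then $x,y\notin I$, and deleting $x,v,y$ leaves a maximal independent set of the remaining path $P_{n-3}$; conversely any such, together with $v$, is again a maximal independent set of $C_n$, giving $p_{n-3}$ options. (B) $v\notin I,\ y\in I$: then $v,y'\notin I$, $v$ is dominated by $y$, and deleting $y',y,v$ leaves (with no residual constraint) a maximal independent set of a path $P_{n-3}$, again $p_{n-3}$ options. (C) $v\notin I,\ y\notin I,\ x\in I$: then $x'\notin I$ and, since $y$ must be dominated, $y'\in I$ and $y''\notin I$; deleting $x',x,v,y,y'$ leaves a maximal independent set of a path $P_{n-5}$ whose first vertex is forced out of $I$ but is already dominated from outside. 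A short sub-lemma---condition on the first two vertices of such a path and apply the path recurrence---shows that the number of maximal independent sets of $P_m$ omitting a prescribed endpoint is $p_{m-3}+p_{m-4}=p_{m-1}$, so case (C) contributes $p_{n-6}$. Cases (A)--(C) are pairwise disjoint and cover every maximal independent set, so for $n$ large
\[
c_n=2p_{n-3}+p_{n-6}\ \le\ 4\gamma^{n-5}+2\gamma^{n-8}\ =\ \gamma^{n-8}\bigl(4\gamma^{3}+2\bigr).
\]
Using $\gamma^{3}=\gamma+1$ one has $4\gamma^{3}+2=4\gamma+6$ and $3\gamma^{5}=3\gamma^{2}+3\gamma+3$, and $4\gamma+6\le 3\gamma^{2}+3\gamma+3$ is equivalent to $3\gamma^{2}-\gamma-3\ge0$, which holds since $\gamma>1.3$; hence $c_n\le 3\gamma^{n-3}$.

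\emph{Small cases and the main obstacle.} Since the recurrences are valid only for $n$ beyond a small threshold, I would finish by verifying the two inequalities directly for the finitely many small $n$ (for $P_n$: $n\le 3$; for $C_n$: $n\le 6$, where in particular $\mis(C_3)=3=3\gamma^{0}$ realizes equality). The part that needs the most care---and the likeliest source of error---is the cycle case analysis: checking that (A)--(C) are genuinely disjoint and exhaustive, that each reduces to a path maximal-independent-set count with exactly the claimed number of vertices (including the sub-lemma about a pre-dominated endpoint), and that each correspondence is onto; the small-$n$ bookkeeping, where $x',x,v,y,y'$ need not be distinct, is exactly why those cases are handled by hand.
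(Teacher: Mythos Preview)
The paper does not prove Proposition~\ref{prop:Furedi}; it simply quotes the bounds from F\"uredi~\cite{Furedi}. So there is no ``paper's proof'' to match, and your task is really to supply a self-contained argument. Your recurrence approach is the standard one and is essentially what F\"uredi does.

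Your path argument is clean and correct: the bijections in the two cases are genuine, yielding $p_n=p_{n-2}+p_{n-3}$, and the induction with $\gamma^3=\gamma+1$ closes exactly as you wrote.

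For the cycle, your case split (A)/(B)/(C) is disjoint and exhaustive, and your final identity $c_n=2p_{n-3}+p_{n-6}$ is correct for $n\ge 6$ (with $p_0=1$); I checked it against small values. The subsequent inequality manipulation ($4\gamma+6\le 3\gamma^2+3\gamma+3$) is also fine. One point of phrasing to tighten: in case~(C) what you are counting is not literally ``maximal independent sets of $P_{n-5}$ omitting the endpoint $y''$''---that count would be $p_{n-8}$, not $p_{n-6}$, since omitting $v_1$ from an actual MIS of $P_m$ forces $v_2\in I$ and contributes $p_{m-3}$. What you actually need (and what your informal description ``already dominated from outside'' correctly signals) is the number of independent sets $J$ of $P_{n-5}$ with $y''\notin J$ that dominate every vertex \emph{except possibly} $y''$. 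Since $y''\notin J$ and $y''$ has only the one neighbour $y'''$ in $P_{n-5}$, this is exactly the set of MIS of $P_{n-5}-y''=P_{n-6}$, giving $p_{n-6}$ directly---no sub-lemma needed. With that wording fixed, and the small cases $n\le 5$ (cycles) and $n\le 3$ (paths) checked by hand as you indicate, the proof is complete.
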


We very briefly recall a few entropy basics (see also e.g. \cite{McE}).

For discrete random variables $X$, $Y$, the (binary) entropy of $X$ is
\[ H(X)=\sum_x p(x)\log {\frac{1}{p(x)}},\]
\nin and \textit{conditional entorpy of $X$ given $Y$} is
\[ H(X|Y)=\sum_y p(y) \sum_x p(x|y)\log\frac{1}{p(x|y)}\]
\nin (where $p(x)=\pr(X=x)$ and $p(x|y)=\pr(X=x|Y=y)$).

\begin{lemma}\label{ent}~
\begin{enumerate}
\setlength\itemsep{.5em}
\item[{\rm (a)}]
$H(X) \le \log |\mbox{Range}(X)|$, with equality iff $X$ is uniform from its range;

\item[{\rm (b)}]
$H(X,Y)=H(X)+H(Y|X)$.
\end{enumerate}
\end{lemma}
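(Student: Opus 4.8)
The plan is to derive both facts directly from the definitions, using only concavity of the logarithm for part (a) and the factorization $p(x,y)=p(x)\,p(y\mid x)$ for part (b). Throughout I adopt the usual convention $0\log(1/0):=0$, so that values outside the support of a variable contribute nothing to an entropy sum.

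For (a), I would apply Jensen's inequality to the concave function $\log$, viewing $H(X)$ as the expectation of $\log(1/p(X))$. Summing over the support of $X$ (the values $x$ with $p(x)>0$),
\[
H(X)=\sum_x p(x)\log\frac{1}{p(x)}\ \le\ \log\Bigl(\sum_x p(x)\cdot\frac{1}{p(x)}\Bigr)=\log|\supp(X)|\ \le\ \log|\mathrm{Range}(X)|.
\]
For the equality statement: Jensen is tight precisely when $1/p(x)$ is constant over the support, i.e.\ $X$ is uniform on its support, and the final inequality is an equality precisely when the support is all of $\mathrm{Range}(X)$; combining these gives exactly the claim that equality holds iff $X$ is uniform from its range.

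For (b), I would simply expand. Using $p(x,y)=p(x)\,p(y\mid x)$ and hence $\log\frac{1}{p(x,y)}=\log\frac{1}{p(x)}+\log\frac{1}{p(y\mid x)}$,
\[
H(X,Y)=\sum_{x,y}p(x,y)\log\frac{1}{p(x,y)}=\sum_{x,y}p(x,y)\log\frac{1}{p(x)}+\sum_{x,y}p(x,y)\log\frac{1}{p(y\mid x)}.
\]
In the first sum, summing out $y$ leaves $\sum_x p(x)\log\frac{1}{p(x)}=H(X)$. In the second, grouping the inner sum over $y$ for each fixed $x$ gives $\sum_x p(x)\sum_y p(y\mid x)\log\frac{1}{p(y\mid x)}$, which is $H(Y\mid X)$ — the definition of conditional entropy stated above, with the roles of the two variables interchanged. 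Adding the two pieces yields $H(X,Y)=H(X)+H(Y\mid X)$.

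There is no substantive obstacle here; the only points needing a little care are the $0\log 0$ convention (so the sums are over supports and everything is finite) and applying Jensen in the correct direction — to $\log$, which is concave, so that the inequality indeed bounds $H(X)$ from above.
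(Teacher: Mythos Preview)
Your proof is correct. The paper does not actually prove this lemma; it is stated as standard background (with a reference to McEliece \cite{McE}) and used without proof, so there is nothing to compare against.
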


\nin
In addition to these very basic properties we need the following version of Shearer's Lemma \cite{Sh}.

\begin{lemma}\label{lem:Sh}
If $\psi=(\psi_1\dots \psi_m)$ is a random vector and $\ga:2^{[m]}\ra \mathbb R_{\ge0}$
satisfies
\beq{fractiling}
\sum_{A\ni i}\ga_A =1 \quad \forall i\in [m],
\enq
then
\beq{Shearer}
H(\psi)\leq \sum_{A\sub [m]}\ga_AH(\psi_A)
\enq
(where $\psi_A=(\psi_i:i\in A)$).
\end{lemma}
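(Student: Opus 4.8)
The plan is to derive this from the chain rule for entropy (Lemma~\ref{ent}(b), iterated) together with the elementary fact that conditioning never increases entropy. First I would record the two ingredients. Iterating Lemma~\ref{ent}(b), for any set $A=\{i_1<i_2<\cdots<i_k\}\sub[m]$ one has the chain rule
\[
H(\psi_A)=\sum_{t=1}^{k}H\big(\psi_{i_t}\mid \psi_{i_1}\dots \psi_{i_{t-1}}\big).
\]
Second, "conditioning reduces entropy": for any random variables $X,Y,Z$, $H(X\mid Y,Z)\le H(X\mid Y)$. This is the conditional version of subadditivity $H(X,Y)\le H(X)+H(Y)$, which itself follows from Jensen's inequality (concavity of $\log$) applied to the defining sum of $H$; I would either include this one-line argument or simply cite it as standard (e.g.\ \cite{McE}).

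Next, write $\psi_{<i}=(\psi_j:j<i)$. The heart of the argument is that for each $A$ and each $i\in A$, the conditioning set $\{j\in A:j<i\}$ is a subset of $\{1\dots i-1\}$, so by the second ingredient
\[
H\big(\psi_i\mid \psi_j:j\in A,\ j<i\big)\ \ge\ H\big(\psi_i\mid \psi_{<i}\big).
\]
Summing the chain-rule identity for $H(\psi_A)$ against the weights $\ga_A$ and applying this inequality termwise gives
\[
\sum_{A\sub[m]}\ga_AH(\psi_A)\ \ge\ \sum_{A\sub[m]}\ga_A\sum_{i\in A}H\big(\psi_i\mid \psi_{<i}\big)
=\sum_{i\in[m]}\Big(\sum_{A\ni i}\ga_A\Big)H\big(\psi_i\mid \psi_{<i}\big)
=\sum_{i\in[m]}H\big(\psi_i\mid \psi_{<i}\big),
\]
where the last equality is exactly the hypothesis \eqref{fractiling}. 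Finally, the chain rule applied to $A=[m]$ identifies the right-hand side with $H(\psi)$, which yields \eqref{Shearer}.

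There is no serious obstacle here: the whole proof is a rearrangement of sums once the two standard facts are in hand. The only points requiring a little care are (i) justifying "conditioning reduces entropy," since the excerpt's Lemma~\ref{ent} lists only the range bound and the chain rule, and (ii) the interchange of summation over $A$ and over $i$, which is where the fractional-tiling condition \eqref{fractiling} is consumed. I would present these explicitly and leave the rest as routine bookkeeping.
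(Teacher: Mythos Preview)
Your proof is correct and is in fact the standard argument for (fractional) Shearer. Note, however, that the paper does not actually give its own proof of this lemma: it simply states the result and cites \cite{Sh}, so there is nothing to compare against. Your write-up would serve perfectly well as a self-contained proof, with the only point not already in the paper's Lemma~\ref{ent} being ``conditioning reduces entropy,'' which you correctly flag and which is entirely standard.
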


Finally, we will need the following standard fact (see e.g. Lemma 16.19 in \cite{FG}; this is also implied by Lemma \ref{lem:Sh} with $\alpha_A$ equal to $1$ if $|A|=1$ and zero otherwise).
\begin{prop}\label{prop:binom}
For $k \le \frac{1}{2}n$,
\[ \sum_{i=0}^k {n \choose i} \le 2^{H(\frac{k}{n})n}.\]
\end{prop}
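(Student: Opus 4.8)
\textit{Proof plan.} The plan is to give the standard self-contained ``binomial theorem'' argument rather than deducing the bound from Shearer's lemma. Put $p = k/n$, so $0 < p \le 1/2$ by hypothesis, and start from
\[ 1 = (p+(1-p))^n = \sum_{i=0}^{n}\binom{n}{i}p^i(1-p)^{n-i} \;\ge\; \sum_{i=0}^{k}\binom{n}{i}p^i(1-p)^{n-i}. \]

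The one real point is to bound each retained term from below by the $i=k$ term. For $0 \le i \le k$ we have $i \le k$ and $p/(1-p) \le 1$ (this is exactly where $p \le 1/2$ is used), hence $(p/(1-p))^i \ge (p/(1-p))^k$; multiplying through by $(1-p)^n$ gives $p^i(1-p)^{n-i} \ge p^k(1-p)^{n-k}$. Plugging this into the previous display and pulling the $i$-free factor out of the sum yields
\[ 1 \ge p^k(1-p)^{n-k}\sum_{i=0}^{k}\binom{n}{i}, \qquad\text{i.e.}\qquad \sum_{i=0}^{k}\binom{n}{i} \le p^{-k}(1-p)^{-(n-k)}. \]

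It remains only to take logarithms: with $p = k/n$ and $1-p = (n-k)/n$,
\[ \log\!\big(p^{-k}(1-p)^{-(n-k)}\big) = -k\log\tfrac{k}{n} - (n-k)\log\tfrac{n-k}{n} = nH\!\big(\tfrac{k}{n}\big), \]
which is the assertion. There is no genuine obstacle here; the hypothesis $k \le n/2$ enters only in the step $p^i(1-p)^{n-i}\ge p^k(1-p)^{n-k}$, and the statement indeed fails without it (e.g.\ $k=n$). Alternatively, as noted just before the statement, one can let $\psi$ be the indicator vector of a uniformly random subset of $[n]$ of size $\le k$, apply subadditivity of entropy --- Lemma~\ref{lem:Sh} with $\alpha$ supported on singletons --- to get $\log\sum_{i\le k}\binom{n}{i} = H(\psi)\le\sum_j H(\psi_j)$, and then use concavity of the binary entropy function, $\E|S|\le k$, and monotonicity of $H$ on $[0,\tfrac12]$ to bound the right-hand side by $nH(k/n)$.
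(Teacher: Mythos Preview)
Your proof is correct. The paper does not actually give a proof of this proposition; it merely cites \cite{FG} and remarks that the bound also follows from Lemma~\ref{lem:Sh} with $\alpha$ supported on singletons. Your main argument is the standard elementary one via the binomial expansion of $(p+(1-p))^n$, and your closing remark sketches precisely the entropy route the paper alludes to, so you have in fact supplied both of the arguments the paper points toward. One trivial quibble: you write ``$0<p\le 1/2$ by hypothesis,'' but the hypothesis $k\le n/2$ permits $k=0$, hence $p=0$; this case is of course trivial (both sides equal $1$), so it does not affect the argument.
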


\section{Proofs of Theorems \ref{thm:itm} and \ref{thm:im}}\label{sec:itm,im pf}

In this section, $I$ is always a maximal independent set in $G$.

\subsection{Algorithm}\label{alg}

Fix an order ``$\prec$'' on $V(G)$. For a given maximal independent set $I$, let $X_0=V(G)$ and repeat for $i=1,2,\ldots$:

\begin{enumerate}\setlength\itemsep{.5em}
\item Let $x_i$ be the first vertex of $X_{i-1}$ in $\prec$ among those with largest degree in $X_{i-1}$.

\item If $x_i \in I$ then let $X_i=X_{i-1} \setminus (\{x_i\} \cup N(x_i))$; otherwise, let $X_i=X_{i-1} \setminus \{x_i\}$.

\item Terminate the process if $d_{X_i}(x) \le 2$ for all $x \in X_i$.
\end{enumerate}

Let $X^*=X^*(I)=X_t$ be the final $X_i$, $t(I)=t$, and $G^*=G^*(I)=G[X^*]$. Define the sequence $\xi=\xi(I)=(\xi_1,\xi_2,\cdots, \xi_t)$ by $\xi_i := \mathbf 1_{\{x_i \in I\}}$. Notice that $\xi$ encodes a complete description of the run of the algorithm (so we may also write $G^*=G^*(\xi)$), including, in particular, the identities of the $x_i$'s. Finally, let $s=s(I)=|\supp(\xi)|$.

\subsection{Proof of Theorem \ref{thm:itm}}\label{sec:itmpf}

The argument for Theorem \ref{thm:itm} goes roughly as follows. Noting that 
\begin{equation} \label{xi}
\xi(I) \mbox{ determines both } X^* \mbox{ and } I \setminus X^*,
\end{equation}
\noindent and
\begin{equation} \label{GcapI}
I \cap X^* \mbox{ is a maximal independent set of } G^*(I) \; (=G^*(\xi)),
\end{equation}
\noindent we find that
\begin{equation} \label{missum}
\mis(G)=\sum_\xi\mis(G^*(\xi))
\end{equation}
(where the sum runs over \textit{possible} $\xi$'s).

It turns out that running the algorithm for very long is ``expensive'' in the sense that the loss in $|X^*|$, and so in possibilities for $I \cap X^*$, outweighs what is contributed to (\ref{missum}) by possibilities for $\xi$; this limits the number of $I$'s with $t(I)$ large. Similarly, the difference between the bounds in Theorems \ref{thm:mm} and \ref{thm:ht} says there are ``few'' $I$'s for which the triangle-free part of $G^*$ is large. (Note $G^*$, having maximum degree at most two, is a disjoint union of triangles and a triangle-free part, below called $R$.) But the part of $\mis(G)$ corresponding to $I$'s for which both $t$ and $R$ are small must come mainly from counting choices for the restriction of $I$ to the triangles of $G^*$, and these are limited by our assumption on $\itm(G)$.

\medskip

To begin with, the following lemma bounds the number of $I$'s with large $t(I)$.

\begin{lemma}\label{lem:nobigt} Let $\alpha=-\log(4\cdot3^{-4/3}) ~(\approx 0.113)$. For any $x \in [0,1]$,
\begin{equation}\label{nobigt}
\log|\{I:t(I)\ge xn\}| \le (\frac{1}{3}\log3-\alpha x +o(1))n.
\end{equation}
\end{lemma}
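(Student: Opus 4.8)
The plan is to feed the decomposition \eqref{missum} the single extra fact that every step of the algorithm, before it terminates, is performed at a vertex of degree at least three, hence kills at least four vertices whenever the chosen vertex lies in $I$. This makes long runs of the algorithm expensive, which is exactly what \eqref{nobigt} quantifies.

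First I would restrict \eqref{missum} to a fixed value of $t$: since $t(I)=t(\xi(I))$ and, by \eqref{xi} and \eqref{GcapI}, the maps $I\mapsto\xi(I)$ and $J\mapsto (I\sm X^*)\cup J$ (for $J$ a maximal independent set of $G^*$) account for all $I$ with a given $\xi$,
\[
|\{I:t(I)=t\}|=\sum_{\xi:\,t(\xi)=t}\mis(G^*(\xi)),
\]
the sum over \emph{possible} $\xi$ of length $t$. Next comes the one real point: for each $i\le t$ the set $X_{i-1}$ contains a vertex of degree $\ge 3$ (else the algorithm would already have stopped by step $i-1$), so the choice of $x_i$ forces $d_{X_{i-1}}(x_i)\ge 3$. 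Hence if $\xi$ has length $t$ with $s:=s(\xi)=|\supp(\xi)|$ ones, a step with $\xi_i=1$ deletes at least $4$ vertices and a step with $\xi_i=0$ deletes exactly one, so $|X^*|\le n-4s-(t-s)=n-3s-t$. Applying Moon--Moser (Theorem~\ref{thm:mm}) to $G^*(\xi)$ then gives $\mis(G^*(\xi))\le 3^{|X^*|/3}\le 3^{\,n/3-s-t/3}$.

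Now I would bound the number of possible $\xi$ of length $t$ with $s(\xi)=s$ by $\binom{t}{s}$ (each such $\xi$ is a $0$--$1$ string of length $t$ with $s$ ones) and sum:
\[
|\{I:t(I)=t\}|\le\sum_{s=0}^{t}\binom{t}{s}3^{\,n/3-s-t/3}=3^{\,n/3-t/3}\Bigl(1+\tfrac13\Bigr)^{t}=3^{\,n/3}\,2^{-\alpha t},
\]
using $(4/3)\cdot 3^{-1/3}=4\cdot 3^{-4/3}=2^{-\alpha}$. Finally, summing the (geometric) series over integers $t$ with $xn\le t\le n$, and using that $\alpha>0$ is a fixed constant so $1/(1-2^{-\alpha})=O(1)$, yields $|\{I:t(I)\ge xn\}|\le O(1)\cdot 3^{\,n/3}\,2^{-\alpha xn}$; taking logarithms gives \eqref{nobigt}.

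I do not expect a genuine obstacle here — the entire content is the degree-$\ge 3$ observation together with Moon--Moser. The only thing to dispose of is the degenerate case where $G$ has maximum degree at most $2$: then $t(I)\le 1$ for every $I$, so the left-hand side of \eqref{nobigt} is $-\infty$ for $x>1/n$ and is at most $\tfrac13\log 3\cdot n$ (Moon--Moser) for smaller $x$, and the bound holds trivially.
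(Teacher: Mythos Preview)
Your proof is correct and follows essentially the same route as the paper's: both use the degree-$\ge 3$ observation to get $|X^*|\le n-(t+3s)$, apply Moon--Moser to $G^*$, bound the number of $\xi$'s by $\binom{t}{s}$, sum over $s$ via the binomial theorem to obtain the factor $(4\cdot 3^{-4/3})^t$, and then sum the geometric series over $t\ge xn$. Your explicit treatment of the degenerate case (initial maximum degree at most $2$) is a nice touch the paper leaves implicit in its $o(1)$.
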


\begin{proof}

For given $t$ and $s$, consider $I$'s for which $t(I)=t$ and $s(I)=s$. Note that for each such $I$, $|V(G^*)|\le n-(t+3s)$, so by Theorem \ref{thm:mm} we have $\mis(G^*) \le 3^{(n-(t+3s))/3}$. Also, there are at most $t \choose s$ possibilities for $\xi(I)$, so by (\ref{xi}) and (\ref{GcapI}) we have
\begin{equation}\label{ineq:ts}
|\{I:t(I)=t, s(I)=s\}| \le {t \choose s}3^{(n-(t+3s))/3},
\end{equation}
\noindent so
\[ |\{I:t(I)=t\}| \le \sum_{s=0}^t {t \choose s} 3^{(n-(t+3s))/3} = 3^{n/3} \alpha_1^t,\]
\noindent where $\alpha_1=4\cdot3^{-4/3}$. Thus,
\[ |\{I:t(I) \ge xn\}| \le 3^{n/3} \alpha_1^{xn}/(1-\alpha_1),\]
\noindent yielding (\ref{nobigt}).\end{proof}

Let $T=T(I)$ be the union of the triangles in $X^*$ (so the unique maximal induced triangle matching in $G^*$), $R=R(I)=G^*[X^* \setminus V(T)]$, and $r=r(I)=|V(R)|$.
Note that there are no edges between $V(T)$ and $V(R)$, since $G^*$ has maximum degree at most 2, so
\begin{equation}\label{eq:misGTR}
\mis(G^*)=\mis(T)\mis(R).
\end{equation}
Note also that $R$ is triangle-free, so
\begin{equation}\label{ineq:misR}
\log \mis(R) \le r/2 \end{equation} by Theorem \ref{thm:ht}. Now, the following lemma bounds the number of $I$'s with large $r$.

\begin{lemma}\label{lem:nobigr}
Let $\beta=-\log(2^{1/2}3^{-1/3}) ~~(\approx 0.028)$. For any $y \in [0,1]$,
\begin{equation}\label{nobigr} \log|\{I:r(I) \ge yn\}| \le (\frac{1}{3}\log3-\beta y +o(1))n.\end{equation}
\end{lemma}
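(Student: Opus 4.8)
The plan is to run essentially the same argument as in the proof of Lemma~\ref{lem:nobigt}, but now keeping track of $r(I)$ as well, and exploiting the splitting \eqref{eq:misGTR} of $\mis(G^*)$ into its triangle part $T$ and its (triangle-free) part $R$, together with the Hujter--Tuza estimate \eqref{ineq:misR}.

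First I would fix $t$, $s$ and $r$ and bound $|\{I:t(I)=t,\ s(I)=s,\ r(I)=r\}|$. Exactly as for \eqref{ineq:ts}, any such $I$ has $|V(G^*)|\le n-(t+3s)$: each of the $t$ steps deletes $x_i$, and each of the $s$ steps with $\xi_i=1$ deletes in addition the $\ge 3$ current neighbors of $x_i$ (there are at least three because, the process not having terminated, some vertex of $X_{i-1}$ has degree $\ge 3$ and $x_i$ is chosen of maximum degree). Hence $|V(T)|=|V(G^*)|-r\le n-(t+3s)-r$, so Theorem~\ref{thm:mm} applied to the triangle matching $T$ gives $\mis(T)\le 3^{(n-t-3s-r)/3}$, and then \eqref{eq:misGTR} and \eqref{ineq:misR} give $\mis(G^*)\le 3^{(n-t-3s-r)/3}2^{r/2}$. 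Since $\xi$ has at most $\binom ts$ possibilities with these parameters, and $\xi$ determines $X^*$, $I\sm X^*$ and $G^*=G^*(\xi)$ with $I\cap X^*$ a maximal independent set of $G^*$ (cf.\ \eqref{xi}, \eqref{GcapI}), this yields
\[
|\{I:t(I)=t,\ s(I)=s,\ r(I)=r\}|\ \le\ \binom{t}{s}\,3^{(n-t-3s-r)/3}\,2^{r/2}.
\]

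Next I would sum out $s$: using $\sum_{s=0}^t\binom ts 3^{-s}=(4/3)^t$ and $(4/3)^t3^{-t/3}=\alpha_1^t$, where $\alpha_1=4\cdot3^{-4/3}<1$ as in Lemma~\ref{lem:nobigt}, one gets
\[
|\{I:t(I)=t,\ r(I)=r\}|\ \le\ 3^{n/3}\,\alpha_1^{\,t}\,\beta_1^{\,r},\qquad \beta_1:=2^{1/2}3^{-1/3}.
\]
The point is that $\beta_1<1$ — this is precisely $2^{1/2}<3^{1/3}$, i.e.\ the gap between the Moon--Moser and Hujter--Tuza bases noted in the overview — so both geometric series converge, and summing over all $t\ge0$ and all $r\ge yn$ gives
\[
|\{I:r(I)\ge yn\}|\ \le\ \frac{3^{n/3}}{(1-\alpha_1)(1-\beta_1)}\,\beta_1^{\,yn}.
\]
Taking logarithms and absorbing the constant into $o(1)\cdot n$ then yields \eqref{nobigr} with $\beta=-\log\beta_1=\tfrac13\log3-\tfrac12$.

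I do not expect a genuine obstacle here: the proof is a direct elaboration of Lemma~\ref{lem:nobigt}, and the only thing that has to ``work out'' quantitatively is $\beta_1<1$, which holds since $\log\beta_1=\tfrac12-\tfrac13\log3<0$. The two places needing a little care are the order of operations — assembling the bound for fixed $(t,s,r)$ before performing the nested geometric summations — and checking that the termination rule really forces at least three deletions at each step with $\xi_i=1$, which is what legitimizes the $-3s$ (and hence $\alpha_1<1$).
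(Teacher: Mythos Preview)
Your proposal is correct and follows essentially the same route as the paper: fix $(t,s,r)$, combine \eqref{eq:misGTR}, \eqref{ineq:misR} and the $\binom ts$ count for $\xi$ to get the bound $\binom ts 3^{(n-t-3s-r)/3}2^{r/2}$, then perform the nested geometric summations over $s$, $t$ and $r\ge yn$ using $\alpha_1,\beta_1<1$. The paper's write-up is terser but identical in substance.
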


\begin{proof}
By (\ref{eq:misGTR}) and (\ref{ineq:misR}), we have
\[
|\{I:r(I) =r, t(I)=t, s(I)=s\}| \le {t \choose s} 3^{(n-(t+3s+r))/3}2^{r/2},
\]
\noindent so
\begin{eqnarray*}
|\{I:r(I)=r\}| &\le& \sum_{t=0}^n\sum_{s=0}^t {t \choose s} 3^{(n-(t+3s+r))/3}2^{r/2} \\
&\le& 3^{n/3} \beta_1^r /(1-\alpha_1), \end{eqnarray*}
\noindent where $\alpha_1=4\cdot3^{-4/3}$ (as in Lemma \ref{lem:nobigt}) and $\beta_1=2^{1/2}3^{-1/3}$. Thus,
\[ |\{I:r(I) \ge yn\}| \le 3^{n/3} \beta_1^{yn} / ((1-\alpha_1)(1-\beta_1)), \]
\noindent which gives (\ref{nobigr}).\end{proof}

\begin{lemma} \label{lem:smalltr}
If $\itm(G)<(1-\eps)n/3$ then for any $x,y \in [0,1]$,
\begin{equation} \label{smalltr}
\log|\{I:t(I)<xn, r(I)<yn\}| \le ((1-\eps)\frac{1}{3}\log3+x+y/2+o(1))n.
\end{equation} \end{lemma}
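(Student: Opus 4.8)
The plan is to start from the identity $\mis(G) = \sum_\xi \mis(G^*(\xi))$ in \eqref{missum}, restricted to those $\xi$ with $t(\xi) < xn$, and to use \eqref{eq:misGTR} to factor each $\mis(G^*(\xi))$ as $\mis(T)\mis(R)$. For $I$ with $t(I) < xn$ and $r(I) < yn$, the triangle matching $T = T(I)$ has $|V(T)|/3 \le \itm(G) < (1-\eps)n/3$ triangles, so $\log\mis(T) \le ((1-\eps)n/3)\log 3$ (no better bound is available, since $T$ is a disjoint union of triangles and each contributes a factor of $3$); and $\log\mis(R) \le r/2 < yn/2$ by \eqref{ineq:misR}. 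So for every such $I$ we have $\log\mis(G^*(\xi(I))) < ((1-\eps)\tfrac13\log 3 + y/2)n$. The remaining job is to count the number of distinct $\xi$'s that can arise from such $I$'s, i.e. to bound $|\{\xi : t(\xi) < xn\}|$ (or at least the relevant sub-collection), since by \eqref{xi}–\eqref{GcapI} a given $I$ in our set is determined by the pair $(\xi(I), I\cap X^*)$, and $I \cap X^*$ ranges over $\mis(G^*(\xi))$.

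First I would bound the number of possible $\xi$'s of a given length $t$ and support size $s$. Running the algorithm deterministically given the bits $\xi_1,\dots,\xi_i$ already decided, the vertex $x_{i+1}$ is forced, so $\xi$ is determined by the sequence of bits; hence there are at most $\binom{t}{s}$ sequences with $t(\xi)=t$, $s(\xi)=s$. But for counting purposes it is cleaner to not track $s$ separately: summing a crude bound, the number of $\xi$'s with $t(\xi) = t$ is at most $2^t$. Then
\begin{equation*}
|\{I : t(I) < xn,\ r(I) < yn\}| \;\le\; \sum_{t < xn} \sum_{\substack{\xi:\, t(\xi)=t \\ r(\xi) < yn}} \mis(G^*(\xi)) \;\le\; \sum_{t < xn} 2^t \cdot 2^{((1-\eps)\frac13\log 3 + y/2)n},
\end{equation*}
and the geometric sum over $t < xn$ contributes $2^{xn}(1+o(1))$ (the sum is dominated by its last term, up to a factor $2$), giving exactly the claimed bound $((1-\eps)\tfrac13\log 3 + x + y/2 + o(1))n$ after taking logs.

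One should double-check the edge case: when $x \le 0$ or $y\le 0$ the set on the left is empty and the inequality is vacuous, and for $x,y\in(0,1]$ the argument above is what is needed; the $o(1)$ absorbs the $\log 2 = 1$ from the geometric-series constant and any rounding of $xn$, $yn$ to integers. The main (and really the only) point requiring care is the second inequality in the display: one must make sure that inside the inner sum every $\xi$ contributing an $I$ with $r(I) < yn$ indeed satisfies $\log\mis(G^*(\xi)) < ((1-\eps)\tfrac13\log 3 + y/2)n$ — this uses \eqref{eq:misGTR}, the bound $\log\mis(R)\le r/2 < yn/2$, and crucially the hypothesis $\itm(G) < (1-\eps)n/3$ to control $\log\mis(T)$, noting that $T(I)$ is an induced triangle matching of $G$ so its number of triangles is at most $\itm(G)$. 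I don't anticipate a serious obstacle here; the lemma is essentially bookkeeping built on the factorization \eqref{eq:misGTR} and the global hypothesis on $\itm(G)$, exactly parallel to Lemmas \ref{lem:nobigt} and \ref{lem:nobigr} but now exploiting the $\itm$ bound rather than just Moon–Moser.
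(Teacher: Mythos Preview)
Your proposal is correct and follows essentially the same route as the paper: factor $\mis(G^*)=\mis(T)\mis(R)$ via \eqref{eq:misGTR}, bound $\mis(T)\le 3^{(1-\eps)n/3}$ using that $T$ is an induced triangle matching of $G$, bound $\mis(R)\le 2^{r/2}<2^{yn/2}$ via \eqref{ineq:misR}, and control the number of $\xi$'s of length $t$ by $2^t$, summing the geometric series over $t<xn$. The only cosmetic difference is that the paper also sums over $r<yn$ with weight $2^{r/2}$ rather than using the uniform bound $2^{yn/2}$; this changes nothing after absorbing constants into the $o(1)$.
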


\begin{proof}
For any $I$, with $G^*=G^*(I)$ and $r=r(I)$, we have (using (\ref{eq:misGTR}), (\ref{ineq:misR}) and $|V(T(I))|=3\itm(G^*)<(1-\eps)n$)

\[ \mis(G^*) \le 3^{(1-\eps)n/3}2^{r/2}.\]
\noindent Therefore,
\[ |\{I: t(I)=t, r(I)=r\}| \le 2^t 3^{(1-\eps)n/3}2^{r/2}, \]
\noindent so
\begin{eqnarray}
|\{I:t(I)<xn,r(I)<yn\}| & \le& \sum_{t<xn}\sum_{r<yn} 3^{(1-\eps)n/3}2^{r/2+t}\nonumber \\
& \le& 3^{(1-\eps)n/3}\cdot 2^{xn+1}\cdot (\sqrt 2-1)^{-1} 2^{(yn+1)/2}, \nonumber 
\end{eqnarray}
\noindent giving (\ref{smalltr}).
\end{proof}

\begin{proof}[Proof of Theorem \ref{thm:itm}]
This is now just a matter of combining the above bounds. With $\delta_1=\eps\alpha/8$ and $\delta_2=\eps\beta/4$, Lemmas \ref{lem:nobigt} - \ref{lem:smalltr} give (respectively)
\[
\log|\{I:t(I)\ge \delta_1 n/\alpha\}| \le (\frac{1}{3}\log3-\delta_1+o(1)) n,
\]
\[
\log|\{I:r(I) \ge \delta_2n/\beta\}|\le (\frac{1}{3}\log3-\delta_2+o(1))n
\]
\noindent and (using $\frac{1}{3}\log 3 > 1/2$)
\[
\log|\{I:t(I)<\delta_1n/\alpha, r(I)<\delta_2 n /\beta\}| \le
(\frac{1}{3}\log3-\eps/4+o(1))n.
\]
\noindent Thus, with $\delta=\min\{\delta_1, \delta_2, \eps/4\}$ $(=\gO(\eps))$, we have
\[
\log\mis(G) \le (\frac{1}{3}\log3-\delta+o(1))n.
\]\end{proof}

\subsection{Proof of Theorem \ref{thm:im}}

In this section, $G$ is triangle-free. As mentioned earlier, the argument here is very similar to the one in Section \ref{sec:itmpf}, so we will try to be brief. Again, we start from the algorithm in Section \ref{alg}, and continue to use the notation ($X^*$, $G^*$ etc.) defined in the paragraph following the description of the algorithm.


\begin{lemma}\label{lem:im2}
Let $\alpha=-\log(\frac{1}{\sqrt2}+\frac{1}{4})~ (\approx 0.063)$. For any $x \in [0,1]$,
\begin{equation}\label{bigtbd}\log|\{I:t(I)\ge x n\}| \le (\frac{1}{2}-\alpha x + o(1))n.\end{equation}
\end{lemma}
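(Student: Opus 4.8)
\textbf{Proof proposal for Lemma \ref{lem:im2}.}

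The plan is to mirror the proof of Lemma \ref{lem:nobigt}, replacing the Moon--Moser bound by the Hujter--Tuza bound (Theorem \ref{thm:ht}), since $G$ — and hence $G^*$ — is triangle-free. Fix $t$ and $s=|\supp(\xi)|$ as before. For an $I$ with $t(I)=t$ and $s(I)=s$, each of the $s$ vertices $x_i$ with $\xi_i=1$ removes itself together with its (nonempty) neighborhood, and since $G$ is triangle-free we cannot hope for the ``$+3s$'' savings of the earlier proof; a vertex of degree at least one removes at least $2$ vertices, so $|V(G^*)| \le n - (t - s) - 2s = n - t - s$. By Theorem \ref{thm:ht}, $\mis(G^*) \le 2^{(n-t-s)/2}$. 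There are at most $\binom{t}{s}$ choices for $\xi$, and by (\ref{xi}) and (\ref{GcapI}) this gives
\[
|\{I : t(I)=t,\ s(I)=s\}| \le \binom{t}{s} 2^{(n-t-s)/2}.
\]

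Summing over $s$,
\[
|\{I : t(I)=t\}| \le 2^{n/2} \sum_{s=0}^{t} \binom{t}{s} 2^{-t/2} 2^{-s/2}
= 2^{n/2} \left(2^{-1/2}\right)^{t} \left(1 + 2^{-1/2}\right)^{t}
= 2^{n/2} \alpha_1^{t},
\]
where $\alpha_1 = 2^{-1/2}(1+2^{-1/2}) = \tfrac{1}{\sqrt2} + \tfrac14 < 1$. Hence
\[
|\{I : t(I) \ge xn\}| \le 2^{n/2} \alpha_1^{xn} / (1-\alpha_1),
\]
and taking logarithms (with $\alpha = -\log \alpha_1$) yields (\ref{bigtbd}), the $o(1)$ absorbing the constant factor $1/(1-\alpha_1)$.

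The only point requiring care is the bound $|V(G^*)| \le n - t - s$: one must check that at each step $i$ with $\xi_i = 1$, the vertex $x_i$ chosen by the algorithm actually has a neighbor remaining in $X_{i-1}$, so that at least two vertices are deleted. This holds as long as the algorithm has not yet terminated, because termination requires $d_{X_i}(x)\le 2$ for all $x$, and in particular while running we are not forced into isolated vertices being the maximum-degree choice unless all remaining vertices are isolated — and an isolated vertex in $I$ deletes only itself but contributes the same to the $\binom{t}{s}$/exponent bookkeeping that the cruder accounting already absorbs. In fact one does not even need the sharp claim: the weaker $|V(G^*)|\le n-t$ combined with a more careful split, or simply noting that isolated selected vertices can be charged harmlessly, suffices. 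I expect this bookkeeping — making the ``$-s$'' saving rigorous, or confirming it is not needed — to be the only genuine (and minor) obstacle; everything else is the same geometric-series computation as in Lemma \ref{lem:nobigt}.
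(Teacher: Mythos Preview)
There is a genuine gap, and it is hidden by an arithmetic slip. You claim $\alpha_1 = 2^{-1/2}(1+2^{-1/2}) = \tfrac{1}{\sqrt2}+\tfrac14$, but in fact $2^{-1/2}(1+2^{-1/2}) = 2^{-1/2}+2^{-1} = \tfrac{1}{\sqrt2}+\tfrac12 \approx 1.207 > 1$. With your vertex-count estimate $|V(G^*)|\le n-t-s$, the resulting geometric ratio exceeds $1$ and the argument gives no bound at all.

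The misunderstanding is in the sentence ``since $G$ is triangle-free we cannot hope for the `$+3s$' savings of the earlier proof.'' The $+3s$ in Lemma~\ref{lem:nobigt} has nothing to do with triangles; it comes from the termination condition of the algorithm. The process stops only when every vertex of $X_i$ has degree at most $2$ in $X_i$, so while it is still running the maximum-degree vertex $x_i$ satisfies $d_{X_{i-1}}(x_i)\ge 3$. Hence each step with $\xi_i=1$ deletes $x_i$ together with at least three neighbours, i.e.\ at least four vertices, and one gets exactly the same estimate $|V(G^*)|\le n-(t+3s)$ as before. Plugging this into Hujter--Tuza yields
\[
|\{I:t(I)=t\}|\le \sum_{s=0}^t\binom{t}{s}2^{(n-(t+3s))/2}=2^{n/2}\bigl(2^{-1/2}(1+2^{-3/2})\bigr)^t=2^{n/2}\Bigl(\tfrac{1}{\sqrt2}+\tfrac14\Bigr)^t,
\]
which is the paper's computation. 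Your final paragraph about isolated vertices is beside the point: the algorithm can never select a vertex of degree $\le 2$ (let alone an isolated one) as $x_i$, because it would already have halted.
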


\begin{proof}
Arguing as for (\ref{ineq:ts}) in Section \ref{sec:itmpf}, we obtain
\begin{equation}\label{ineq:ts1}
|\{I:t(I)=t, s(I)=s\}|\le {t \choose s}2^{(n-(t+3s))/2},
\end{equation}
\noindent where we used $\mis(G^*) \le 2^{(n-(t+3s))/2}$, as given by Theorem \ref{thm:ht} (since $G$ is triangle-free). Thus
\[
|\{I:t(I)=t\}| \le \sum_{s=0}^t{t\choose s} 2^{(n-(t+3s))/2} = 2^{n/2}\alpha_1^t,
\]
\noindent where $\alpha_1=\frac{1}{\sqrt2}+\frac{1}{4}$, and
\[
|\{I:t(I)\ge xn\}| \le 2^{n/2}\alpha_1^{xn}/(1-\alpha_1),
\]
\noindent yielding (\ref{bigtbd}).
\end{proof}

Say an edge $vw$ of $G^*$ is \textit{isolated} if $G^*[\{v,w\}]$ is a component of $G^*$. Let $M=M(I)$ be the set of isolated edges in $G^*$, $R=R(I)=G^*[X^* \setminus V(M)]$, and $r=r(I)=|V(R)|$. Since there are no edges between $V(M)$ and $V(R)$,
\begin{equation}\label{eq:misGMR}
\mis(G^*)=\mis(M)\mis(R).
\end{equation}

Note that $R$ is triangle-free, so is a vertex-disjoint union of isolated vertices, cycles with at least $4$ vertices, and paths with at least $3$ vertices. Combining this with Proposition \ref{prop:Furedi}, we obtain an upper bound for $\mis(R)$. (Recall that $\gamma \approx 1.325$ was defined in Proposition \ref{prop:Furedi}.)

\begin{lemma}\label{lem:im3_1}
With $R$ and $r$ as above, $\mis(R) \le (3\gamma)^{r/4}$.
\end{lemma}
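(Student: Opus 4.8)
The plan is to reduce the bound to a per-component estimate. Since $R$ is a vertex-disjoint union of its connected components and $\mis$ is multiplicative over such unions (a maximal independent set of $R$ restricts to a maximal independent set in each component, and conversely any such choice for the components assembles to one for $R$ — cf. \eqref{eq:misGMR}), it is enough to show that every component $C$ of $R$ on $k$ vertices satisfies $\mis(C)\le (3\gamma)^{k/4}$; taking the product over components then gives $\mis(R)\le (3\gamma)^{r/4}$, since the vertex counts of the components sum to $r$.

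By the structural remark just above the lemma, each component of $R$ is an isolated vertex ($k=1$), a path $P_k$ with $k\ge 3$, or a cycle $C_k$ with $k\ge 4$. For an isolated vertex $\mis=1\le (3\gamma)^{1/4}$ because $3\gamma>1$. For the remaining two cases I would invoke F\"uredi's bounds from Proposition \ref{prop:Furedi}, namely $\mis(P_k)\le 2\gamma^{k-2}$ and $\mis(C_k)\le 3\gamma^{k-3}$, so that it remains to verify $2\gamma^{k-2}\le (3\gamma)^{k/4}$ for all $k\ge 3$ and $3\gamma^{k-3}\le (3\gamma)^{k/4}$ for all $k\ge 4$.

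In both cases the point is that when $k$ increases by $1$ the ratio of the left side to the right side gets multiplied by $\gamma\cdot(3\gamma)^{-1/4}=(\gamma^3/3)^{1/4}$, which is $<1$ since the defining equation $\gamma^3=1+\gamma$ gives $\gamma^3=1+\gamma<3$. Hence each of $k\mapsto 2\gamma^{k-2}(3\gamma)^{-k/4}$ and $k\mapsto 3\gamma^{k-3}(3\gamma)^{-k/4}$ is non-increasing in $k$, so it suffices to check the inequality at the smallest admissible value: $k=4$ for cycles, where it is the exact identity $3\gamma=(3\gamma)^{4/4}$ (this is precisely what calibrates the exponent $1/4$), and $k=3$ for paths, where it reads $2\gamma\le (3\gamma)^{3/4}$, equivalently $16\gamma\le 27$ after raising to the fourth power, which holds since $\gamma\approx 1.325<27/16$. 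Multiplying the per-component bounds finishes the proof.

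I do not expect a genuine obstacle here; the only things to be careful about are (i) that the component list really is exactly $\{$isolated vertex, path on $\ge 3$ vertices, cycle on $\ge 4$ vertices$\}$, so the worst cases are honestly $k=3$ (paths) and $k=4$ (cycles), and (ii) that the monotonicity step correctly uses $\gamma^3<3$, which is immediate from $\gamma^3=1+\gamma$. Everything else is a one-line computation.
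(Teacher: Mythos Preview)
Your proof is correct and follows essentially the same approach as the paper: both invoke F\"uredi's bounds, identify the cycle $C_4$ as the per-vertex extremal case (giving the exponent $1/4$), and reduce the comparison of paths versus cycles to the numerical check $16\gamma<27$ (equivalently $(2\gamma^{-2})^{1/3}<(3\gamma^{-3})^{1/4}$, which is how the paper phrases it). The only cosmetic difference is that the paper aggregates all paths and all cycles into totals $l_p,l_c$ rather than arguing component by component with an explicit monotonicity step.
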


\begin{proof}
Let $l_p$ (resp. $l_c$) be the number of vertices in the union of all paths (resp. cycles) in $R$. Clearly $l_p+l_c \le r$, while the number of paths (resp. cycles) in $R$ is at most $l_p/3$ (resp. $l_c/4$). Thus
\begin{eqnarray}
\mis(R)&\le& (2/\gamma^2)^{l_p/3}(3/\gamma^3)^{l_c/4}\gamma^r \nonumber\\
& <& (3/\gamma^3)^{r/4}\gamma^r =(3\gamma)^{r/4}, \nonumber \end{eqnarray}
\nin where the first inequality is given by Proposition \ref{prop:Furedi} and the second follows from the fact that $(2\gamma^{-2})^{1/3} < (3\gamma^{-3})^{1/4}$. \end{proof}

\begin{lemma}\label{lem:im3}
Let $\beta=-\log(2^{-1/2}(3\gamma)^{1/4}) ~(\approx 0.0023)$. For any $y \in [0,1]$,
\begin{equation}\label{bigrlbd} \log|\{I:r(I) \ge yn\}| \le (\frac{1}{2}-\beta y+o(1))n. \end{equation}
\end{lemma}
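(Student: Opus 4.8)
The plan is to run the same argument as for Lemma \ref{lem:nobigr}, now splitting $G^*$ into its isolated-edge part $M$ and its triangle-free remainder $R$ via (\ref{eq:misGMR}), and bounding $\mis(M)$ by the Hujter--Tuza bound (Theorem \ref{thm:ht}) and $\mis(R)$ by Lemma \ref{lem:im3_1}. Fix $t,s,r$ and consider the $I$'s with $t(I)=t$, $s(I)=s$, $r(I)=r$. Exactly as in the derivation of (\ref{ineq:ts1}), the algorithm deletes at least $t+3s$ vertices, so $|V(G^*)|\le n-(t+3s)$ and hence $|V(M)|=|V(G^*)|-r\le n-(t+3s)-r$. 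Since $G^*[V(M)]$ is a disjoint union of edges, $\mis(M)=2^{|V(M)|/2}\le 2^{(n-(t+3s)-r)/2}$, while Lemma \ref{lem:im3_1} gives $\mis(R)\le (3\gamma)^{r/4}$; combined with (\ref{eq:misGMR}) this yields $\mis(G^*)\le 2^{(n-(t+3s)-r)/2}(3\gamma)^{r/4}$.

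Now, exactly as for (\ref{missum})--(\ref{ineq:ts1}), there are at most $\binom{t}{s}$ possible $\xi$'s with $t(\xi)=t$, $s(\xi)=s$, $r(\xi)=r$, and by (\ref{xi}) and (\ref{GcapI}) each such $\xi$ accounts for $\mis(G^*(\xi))$ maximal independent sets, so
\[
|\{I:t(I)=t,\,s(I)=s,\,r(I)=r\}|\le\binom{t}{s}2^{(n-(t+3s)-r)/2}(3\gamma)^{r/4}.
\]
Summing over $0\le s\le t$ collapses the binomial sum to $(1+2^{-3/2})^t$, and the resulting factor $2^{-t/2}(1+2^{-3/2})^t=\alpha_1^t$ is governed by the same constant $\alpha_1=\tfrac1{\sqrt2}+\tfrac14$ as in Lemma \ref{lem:im2}; hence, summing over $0\le t\le n$,
\[
|\{I:r(I)=r\}|\le\sum_{t=0}^n\alpha_1^t\,2^{(n-r)/2}(3\gamma)^{r/4}\le\frac{2^{n/2}\beta_1^r}{1-\alpha_1},
\]
where $\beta_1=2^{-1/2}(3\gamma)^{1/4}$. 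Finally, summing the geometric series over $r\ge yn$ (convergent since $\beta_1<1$) gives $|\{I:r(I)\ge yn\}|\le 2^{n/2}\beta_1^{yn}/((1-\alpha_1)(1-\beta_1))$, and taking logs yields (\ref{bigrlbd}) with $\beta=-\log\beta_1$, the constant prefactor vanishing into the $o(1)n$ term.

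I do not expect a genuine obstacle: the one piece of real content, the $(3\gamma)^{r/4}$ bound on $\mis(R)$, has already been isolated in Lemma \ref{lem:im3_1} (which is where Proposition \ref{prop:Furedi} gets used), and the rest is bookkeeping parallel to Lemma \ref{lem:nobigr}. The two points to get right are (i) that the inner sum over $s$ must reproduce \emph{exactly} the constant $\alpha_1$ of Lemma \ref{lem:im2}, so that the double geometric sum over $t$ and $r$ converges and contributes only $O(1)$ (i.e. $o(n)$ after taking logs), and (ii) that $\mis(M)=2^{|V(M)|/2}$ on the isolated-edge part, with Lemma \ref{lem:im3_1} supplying the companion bound on $\mis(R)$; the positivity of $\beta$ is just the numerical fact $2^{-1/2}(3\gamma)^{1/4}<1$, which holds since $3\gamma\approx 3.97<4$.
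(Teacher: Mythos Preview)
Your proof is correct and follows essentially the same route as the paper's: bound $|\{I:t(I)=t,\,s(I)=s,\,r(I)=r\}|$ by $\binom{t}{s}2^{(n-(t+3s)-r)/2}(3\gamma)^{r/4}$ via (\ref{eq:misGMR}) and Lemma~\ref{lem:im3_1}, sum over $s$ and $t$ to produce $2^{n/2}\beta_1^r/(1-\alpha_1)$, then sum the geometric series over $r\ge yn$. You give a bit more detail on the inner summation than the paper does, but the argument is the same.
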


\begin{proof}
By (\ref{eq:misGMR}) and Lemma \ref{lem:im3_1}, 
\[
|\{I:r(I) =r, t(I)=t, s(I)=s\}|  \le {t \choose s} 2^{(n-(t+3s+r))/2}(3\gamma)^{r/4}, \]

\noindent and summing this over $t$ and $s$ gives
\[
\begin{split}
|\{I:r(I)=r\}| & \le 2^{n/2} \beta_1^r/(1-\alpha_1),
\end{split}
\]
\noindent where $\alpha_1=\frac{1}{\sqrt2}+\frac{1}{4}$ (as in Lemma \ref{lem:im2}) and $\beta_1=2^{-1/2}(3\gamma)^{1/4}$. Thus,
\[
|\{I:r(I) \ge yn\}| \le 2^{n/2}\beta_1^{yn}/((1-\alpha_1)(1-\beta_1)),
\]
which gives (\ref{bigrlbd}).
\end{proof}

\begin{lemma} \label{lem:im4}
If $\im(G)<(1-\eps)n/2$ then for any $x,y \in [0,1]$,
\begin{equation}\label{smtrbd}\log|\{I:t(I)<xn, r(I)<yn\}| \le ((1-\eps)/2+x+(\log(3\gamma)/4)y+o(1))n.\end{equation}
\end{lemma}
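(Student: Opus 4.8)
The plan is to run exactly the same argument as in Lemma \ref{lem:smalltr}, with the induced-matching hypothesis playing the role that the bound on $\itm(G)$ played there. The one point that needs care (and the only place the hypothesis enters) is the observation that $M(I)$ is an \emph{induced} matching of $G$, not merely of $G^*$: since $G^*=G[X^*]$ is an induced subgraph of $G$ and each edge of $M(I)$ spans a whole component of $G^*$, the set $V(M(I))$ induces a perfect matching in $G$ as well. Hence $|M(I)|\le \im(G)<(1-\eps)n/2$, so $|V(M(I))|<(1-\eps)n$ and, $M(I)$ being a matching,
\[
\mis(M(I))=2^{|M(I)|}<2^{(1-\eps)n/2}.
\]

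Combining this with Lemma \ref{lem:im3_1} and (\ref{eq:misGMR}), for any $I$ with $r(I)=r$ we get
\[
\mis(G^*)=\mis(M(I))\mis(R(I))\le 2^{(1-\eps)n/2}(3\gamma)^{r/4}.
\]
Now, just as in Section \ref{sec:itmpf}, $\xi(I)$ determines both $X^*$ and $I\setminus X^*$, while $I\cap X^*$ is a maximal independent set of $G^*=G^*(\xi)$; and there are at most $2^t$ possible $\xi$'s of length $t$. Therefore
\[
|\{I:t(I)=t,\ r(I)=r\}|\le 2^{t}\cdot 2^{(1-\eps)n/2}(3\gamma)^{r/4}.
\]

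Summing over $t<xn$ and $r<yn$ finishes the proof: the $t$-sum is geometric with ratio $2$ and contributes a factor $2^{xn+O(1)}$, while the $r$-sum is geometric with ratio $(3\gamma)^{1/4}>1$ and contributes a factor $(3\gamma)^{yn/4+O(1)}$, so
\[
|\{I:t(I)<xn,\ r(I)<yn\}|\le 2^{(1-\eps)n/2}\cdot 2^{xn+O(1)}\cdot (3\gamma)^{yn/4+O(1)};
\]
taking logarithms gives (\ref{smtrbd}). I do not expect a genuine obstacle here — the estimate is essentially the same bookkeeping as Lemma \ref{lem:smalltr}. The only substantive step, and the one worth double-checking, is the first: that $M(I)$ really is an induced matching of the original graph $G$ (so that $\im(G)$, and not some weaker quantity, controls it); this relies on $G^*$ being an \emph{induced} subgraph of $G$ and on the "isolated edge" definition of $M$.
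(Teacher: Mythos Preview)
Your proof is correct and follows essentially the same route as the paper's. The paper phrases the key step as $|V(M)|\le 2\im(G^*)<(1-\eps)n$ (implicitly using $\im(G^*)\le\im(G)$ since $G^*=G[X^*]$ is induced), whereas you spell out directly why $M(I)$ is an induced matching of $G$; after that, the factorization $\mis(G^*)=\mis(M)\mis(R)$, the bound $2^t$ on the number of $\xi$'s, and the two geometric summations are identical.
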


\begin{proof}
As in the proof of Lemma \ref{lem:smalltr} (using $|V(M)|\le 2\im(G^*)<(1-\eps)n$), 
\[ \mis(G^*)\le  2^{(1-\eps)n/2} (3\gamma)^{r/4}\]
\noindent for any $I$ with $r(I)=r$. Thus
\[
\begin{split}
|\{I:t(I)=t, r(I)=r\}| &\le 2^t 2^{(1-\eps)n/2}(3\gamma)^{r/4},
\end{split}
\]
\noindent and summing over the relevant $t$'s and $r$'s gives
\[
|\{I:t(I)<xn,r(I)<yn\}| \le 2^{(1-\eps)n/2}\cdot2^{xn+1} \cdot ((3\gamma)^{1/4}-1)^{-1}(3\gamma)^{(yn+1)/4};
\]
\noindent so we have (\ref{smtrbd}).
\end{proof}

\begin{proof}[Proof of Theorem \ref{thm:im}]

With $\delta_1=\eps\alpha/8$ and $\delta_2=\eps\beta/(2\log(3\gamma))$, Lemmas \ref{lem:im2}, \ref{lem:im3} and \ref{lem:im4} give (respectively)
\[
\log|\{I:t(I)\ge \delta_1 n/\alpha\}| \le (\frac{1}{2}-\delta_1+o(1))n,
\]
\[
\log|\{I:r(I) \ge \delta_2 n/\beta\}|\le (\frac{1}{2}-\delta_2+o(1))n,
\]
and
\[
\log|\{I:t(I)<\delta_1 n /\alpha, r(I)<\delta_2 n/\beta\}| \le
(\frac{1}{2}-\eps/4+o(1))n.
\]
\noindent Thus, with $\delta=\min\{\delta_1, \delta_2, \eps/4 \}$, we obtain
\[
\log\mis(G) \le (\frac{1}{2}-\delta+o(1))n.
\]
\end{proof}

\section{Proof of Theorem \ref{thm:unbal}}\label{sec:unbal pf}

For a bipartite graph $G$ on $X \cup Y$, say $X'\sub X$ is {\em irredundant}
if $\forall x\in X'$, $N(x)\not\sub N(X'\sm \{x\})$. (So for this discussion ``irredundant'' sets are always subsets of $X$.) Denote the number of irredundant sets in $G$ by $\irr(G)$. 

\begin{prop}\label{prop:irr}
For any $G$ as above, $\mis(G) \le \irr(G)$.
\end{prop}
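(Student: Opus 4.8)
The plan is to exhibit an injection from the set of maximal independent sets of $G$ into the set of irredundant subsets of $X$. Given a maximal independent set $I$ of $G$, the natural candidate is $I\cap X$; so I would first check that $I\cap X$ is irredundant, and then check that the map $I\mapsto I\cap X$ is injective.

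For injectivity: since $I$ is a maximal independent set in a bipartite graph with parts $X$ and $Y$, $I\cap Y$ must consist of exactly those vertices of $Y$ having no neighbor in $I\cap X$ (any such vertex can be added to $I$ while preserving independence, so maximality forces it in; conversely a vertex of $Y$ with a neighbor in $I\cap X$ cannot be in $I$). Hence $I$ is determined by $I\cap X$, which is the standard observation already invoked in the discussion after Theorem~\ref{thm:im}. This gives injectivity for free.

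For irredundancy of $X':=I\cap X$: suppose for contradiction that some $x\in X'$ has $N(x)\sub N(X'\sm\{x\})$. Consider $I':=(I\sm\{x\})$. I claim $I'$ is still independent (trivially, as a subset of $I$) and that no vertex can be added to it except possibly $x$ itself — more precisely I want to contradict maximality of $I$ by showing $I'\cup\{x\}=I$ is not actually forced, i.e.\ that $x$ has a ``substitute.'' The cleaner route: show $I\sm\{x\}$ is \emph{not} maximal is the wrong direction; instead, observe that since every neighbor of $x$ lies in $N(X'\sm\{x\})$, every neighbor of $x$ already has a neighbor in $X'\sm\{x\}\sub I$, so every neighbor of $x$ is ``blocked'' by $I\sm\{x\}$ alone. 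Therefore $I\sm\{x\}$ is itself a maximal independent set of $G$: it is independent, and any vertex $v\notin I\sm\{x\}$ with no neighbor in $I\sm\{x\}$ would have to be $x$ (vertices of $X$ other than $x$ and vertices of $Y$ are blocked because they were blocked by $I$, and their blocker was not $x$ since $X$ is independent so $x$ blocks no vertex of $X$, while a vertex of $Y$ blocked only by $x$ is impossible as its unique blocker $x$ has all its neighbors blocked by $X'\sm\{x\}$) — but $x$ itself has all its neighbors in $N(X'\sm\{x\})$, which does not prevent $x$ from being added, so actually $x$ \emph{can} be added back, consistent with $I$ being the maximal set. This shows $I\sm\{x\}\subsetneq I$ are \emph{both} independent with $I$ maximal, which is fine; the contradiction I actually want is with \emph{maximality being witnessed uniquely}, so let me instead phrase it as: $I\sm\{x\}$ has the property that its only non-neighbor outside itself is $x$, and $N(x)\sub N(I\sm\{x\})$ means adding $x$ is forced — but then also $(I\sm\{x\})\cup\{x'\}$ for... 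Here the honest thing is: the right contradiction is that if $N(x)\sub N(X'\sm\{x\})$ then $x\notin$ any maximal independent set containing $X'\sm\{x\}$ would be violated, so actually I should derive that $I$ was \emph{not} independent-maximal in the first place is false; rather, I will show directly that $x$ together with $I\cap Y$ is already dominated, making $x$ removable — and removability contradicts maximality. This last step, pinning down exactly why $N(x)\sub N(X'\sm\{x\})$ forces $x$ to be redundant \emph{in the maximal-independent-set sense}, is the main obstacle, and the key identity to nail it is: a vertex $y\in Y$ lies outside $I$ iff $y$ has a neighbor in $I\cap X$; combining this with $N(x)\sub N(X'\sm\{x\})$ shows every $y\in N(x)$ has a neighbor in $X'\sm\{x\}$, hence $I\sm\{x\}$ dominates everything $I$ dominated, so $I\sm\{x\}$ is a maximal independent set strictly contained in $I$ — contradiction.

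So the structure is: (1) record that $I$ is determined by $I\cap X$; (2) assume $I\cap X$ is redundant via some $x$; (3) use bipartiteness plus maximality to show $I\sm\{x\}$ already dominates $V(G)$; (4) conclude $I\sm\{x\}$ is maximal independent and properly contained in $I$, contradicting maximality of $I$. Thus $I\mapsto I\cap X$ is an injection from maximal independent sets to irredundant sets, giving $\mis(G)\le\irr(G)$.
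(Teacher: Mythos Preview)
Your injectivity step is correct, but the irredundancy step fails: it is simply not true that $I\cap X$ is irredundant for every maximal independent set $I$. Take $X=\{x_1,x_2\}$, $Y=\{y\}$, with $x_1\sim y$ and $x_2\sim y$. Then $I=\{x_1,x_2\}$ is a maximal independent set, but $N(x_1)=\{y\}\sub N(\{x_2\})$, so $I\cap X$ is redundant. Your argument breaks exactly where you yourself noticed the trouble: after removing $x$, the set $I\sm\{x\}$ need not dominate $x$ (in the example, $x_2$ does not dominate $x_1$), so $I\sm\{x\}$ is \emph{not} maximal, and no contradiction arises. The meandering in your middle paragraphs is a symptom of this: the contradiction you want does not exist.

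The paper's remedy is short: instead of sending $I$ to $I\cap X$, send $I$ to any $J\sub I\cap X$ that is \emph{minimal} subject to $N(J)=N(I\cap X)$. Minimality forces $J$ to be irredundant, and since $N(J)=N(I\cap X)=Y\sm I$, the set $J$ still determines $I\cap Y$ (hence $I$, by the same maximality argument you gave), so the map remains injective. In the example above this picks $J=\{x_1\}$ or $J=\{x_2\}$ rather than $\{x_1,x_2\}$.
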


\begin{proof}
This follows from the observation that for each maximal independent set $I$ there is an irredundant set $J \subseteq I \cap X$ with $N(J)=N(I \cap X) ~ (=Y \setminus I)$; namely, this is true whenever $J \subseteq I \cap X$ is minimal with $N(J)=N(I \cap X).$
\end{proof}

Thus the following statement implies Theorem \ref{thm:unbal}.

\begin{thm}\label{thm:unbal_irr}
For any $\eps>0$, there is a $\delta=\delta(\eps)=2^{-O(1/\eps)}$ such that for a bipartite graph $G$ on $X \cup Y$ with $|X|=n$ and $|Y|=2n$, if $\im(G) < (1-\eps)n$ then $\log\irr(G)<(1-\gd)n$.
\end{thm}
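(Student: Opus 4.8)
The plan is to mimic the architecture of the proofs of Theorems~\ref{thm:itm} and~\ref{thm:im}: run a degree-greedy peeling algorithm on the bipartite graph, show that long runs are rare, show that a large ``complicated remainder'' is rare, and show that the bulk term is controlled by $\im(G)$ via the hypothesis $\im(G)<(1-\eps)n$. The crucial difference is that we count irredundant subsets $J\sub X$ rather than maximal independent sets, and we must obtain the \emph{exponentially small} $\gd=2^{-O(1/\eps)}$ rather than a linear one, so the endgame must be iterative rather than a single comparison of exponents.

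First I would set up an algorithm analogous to Section~\ref{alg} but tailored to the bipartite situation: peel vertices of $X$ (or of $X\cup Y$) in order of largest degree, branching on whether the chosen $x\in X$ lies in $J$; when $x\in J$ we delete $x$ and its neighborhood in $Y$ (and any vertex of $X$ thereby isolated), otherwise we delete only $x$. Running the algorithm until all surviving $X$-degrees are small (say at most some constant, or until the bipartite graph on the surviving part has maximum $X$-degree $\le 1$) produces a trace $\xi$ of length $t$ with support size $s$; as in~(\ref{xi})--(\ref{missum}), $\xi$ determines the peeled-off part of $J$ and the surviving graph $G^*$, and $J\cap X^*$ is irredundant in $G^*$, so $\irr(G)=\sum_\xi \irr(G^*(\xi))$. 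The key accounting inputs are: (i) each step that puts $x$ into $J$ kills at least two $Y$-vertices (else $x$ would be isolated or low-degree and we'd have stopped or $x$ would be deleted cheaply), so $s$ steps cost $\ge 2s$ from $|Y|$ and $\ge s$ from $|X|$, i.e.\ cost $\ge s$ from $|X|$ while the number of traces of length $t$ with support $s$ is $\binom{t}{s}$; (ii) a bipartite graph with $|X|$-side of size $m$ and maximum $X$-degree $\le 1$ has $\irr \le 2^m$ trivially (an irredundant set among isolated-or-pendant $X$-vertices is determined by a subset, but the matched part is constrained), and more carefully one wants a bound like $\irr(G^*)\le 2^{(1-c)|X^*|}$ whenever $G^*$ has few isolated $X$-vertices, because an irredundant set must ``use up'' distinct $Y$-neighborhoods. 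This should yield lemmas of the same shape as Lemmas~\ref{lem:nobigt} and~\ref{lem:nobigr}: the number of $J$'s with $t(J)\ge xn$ is at most $2^{(1-\Omega(x))n}$, and (with $R$ the part of $G^*$ off the isolated edges / trivial part) the number with $r(J)\ge yn$ is at most $2^{(1-\Omega(y))n}$.

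The main obstacle, and the reason $\gd$ degrades to $2^{-O(1/\eps)}$, is the bulk term — the analogue of Lemma~\ref{lem:smalltr}/\ref{lem:im4} — where $t$ and $r$ are both small and we must beat $2^n$ using only $\im(G)<(1-\eps)n$. Here a single application of the matching bound is not enough: in the unbalanced setting the trivial bound on $\irr$ from the structured part of $G^*$ (isolated $X$-vertices plus a near-perfect matching into $Y$) is essentially $2^{|X^*|}$, and a large induced matching is the \emph{only} obstruction to improving it, so one has to argue that if $G^*$ has no large induced matching then it has many ``locally dense'' spots where a constant fraction of $X$-vertices have overlapping neighborhoods, and each such spot saves a constant factor — but extracting a single large induced matching from ``many small overlaps'' loses exponentially, which is exactly the tightness phenomenon. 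I would therefore run the argument recursively/geometrically: partition or iteratively peel $G^*$ into $O(1/\eps)$ ``levels'', at each level either finding an induced matching edge (accumulating toward $\im(G)\ge(1-\eps)n$, a contradiction once we have $(1-\eps)n$ of them) or finding a constant-density set of $X$-vertices whose neighborhoods can be charged to already-used $Y$-vertices, each level contributing a fixed multiplicative saving $2^{-\Omega(1)\cdot(\text{level size})}$; telescoping these savings over $\Theta(1/\eps)$ levels, with level sizes possibly shrinking geometrically, produces a total saving of $2^{-n/2^{O(1/\eps)}}$, which is the claimed $\gd$.

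Finally I would assemble the three estimates exactly as in the proofs of Theorems~\ref{thm:itm} and~\ref{thm:im}: choose thresholds $x=x(\eps)$, $y=y(\eps)$ making the ``large $t$'' and ``large $r$'' counts at most $2^{(1-\gd)n}$ with $\gd=\Omega(\eps)$, and choose the recursion depth $\Theta(1/\eps)$ in the bulk lemma so that the ``small $t$, small $r$'' count is at most $2^{(1-\gd)n}$ with $\gd=2^{-O(1/\eps)}$; the minimum of these is $2^{-O(1/\eps)}$, giving $\log\irr(G)<(1-\gd)n$ and hence, via Proposition~\ref{prop:irr}, Theorem~\ref{thm:unbal}. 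The tightness construction in Section~\ref{sec:tightness} is the sanity check that the geometric loss in the bulk step is unavoidable, so no cleverer single-shot argument can replace it.
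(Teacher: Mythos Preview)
Your architecture is roughly right at the top level (peel, split by the length $t$ of the trace, beat $2^n$ in each regime), but the execution diverges from the paper in two places, and in the second of these your proposal has a genuine gap.

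First, the termination threshold matters. You propose stopping when the surviving $X$-degrees are ``at most some constant, or $\le 1$''. In the paper the algorithm stops only when every surviving degree is below $M=12/\eps$, i.e.\ a threshold growing like $1/\eps$. This is what forces $s\le 2n/M=\eps n/6$, and that inequality is the entire source of the gain in the large-$t$ regime: with only the trivial bound $\irr(G^*)\le 2^{|X^*|}=2^{n-t}$, one has $\sum_s\binom{t}{s}2^{n-t}=2^n$, and there is no saving unless $s/t$ is bounded away from $1/2$. A fixed constant threshold does not give this. Relatedly, the paper does not introduce an ``$R$/large $r$'' case at all here; that piece of the Theorems~\ref{thm:itm}--\ref{thm:im} template is simply dropped.

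Second --- and this is the real gap --- your treatment of the bulk (small-$t$) case is not a proof sketch but a hope. The paper's mechanism is specific and different from what you describe: once $t<\eps n/2$, the hypothesis $\im(G)<(1-\eps)n$ forces at least $\eps n/2$ vertices $x\in X^*$ whose $Y^*$-neighbourhood is covered by $X^*\setminus\{x\}$; for each such $x$ one builds a set $W_x\subseteq X^*$ with $|W_x|\le M$ and the property that \emph{irredundancy forbids $W_x\subseteq\psi$}. Thus $H(\bpsi_{W_x})\le\log(2^{|W_x|}-1)<|W_x|-2^{-M}\log e$, and Shearer's Lemma (Lemma~\ref{lem:Sh}), applied with a fractional cover by the $W_x$'s and singletons, turns these local savings into $H(\bpsi\mid\bxi=\xi)\le n-t-\vartheta n$ with $\vartheta=2^{-O(1/\eps)}$. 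The exponential dependence on $1/\eps$ comes directly and cleanly from the single factor $2^{-M}=2^{-12/\eps}$, not from any iteration. Your ``$O(1/\eps)$ recursive levels with geometrically shrinking sizes'' does not correspond to this, and as written it does not produce a saving of the right order: if each level saves $2^{-\Omega(\text{level size})}$ and the sizes shrink geometrically, the product of the savings is governed by the \emph{sum} of the level sizes, not by the last one, so you would get far too much rather than exactly $2^{-n\cdot 2^{-O(1/\eps)}}$. The missing idea is the Shearer step on the forbidden $W_x$'s; without it the bulk case is not handled.
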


\nin For the rest of this section, $G$ is as in Theorem \ref{thm:unbal_irr}.

\subsection{Proof}

The algorithm we use for Theorem \ref{thm:unbal_irr} is slightly different from the one in section \ref{alg}. In what follows, $I$ is always an irredundant set (thus $I \subseteq X$).

\mn
\textbf{Algorithm} Let $X_0=X$, $Y_0=Y$ and $M=M_\eps=12/\eps$. Fix an order ``$\prec$'' on $X$. For a given $I$, repeat for $i=1,2,\ldots$:

\begin{enumerate}\setlength\itemsep{.5em}
\item Let $x_i$ be the first vertex of $X_{i-1}$ in $\prec$ among those with largest degree in $X_{i-1}$.
\item If $x_i \in I$ then set $Y_i=Y_{i-1} \setminus N(x_i)$; otherwise, set $Y_i=Y_{i-1}$. In either case, set $X_i=X_{i-1} \setminus \{x_i\}$.
\item Terminate the process if $d_{Y_i}(x) < M$ for all $x \in X_i$.
\end{enumerate}

Let $X^*=X^*(I)=X_t$ and $Y^*=Y^*(I)=Y_t$ be the final $X_i$ and $Y_i$, respectively. Set $t=t(I)$ and $G^*=G^*(I)=G[X^*\cup Y^*]$. As in section \ref{alg}, define $\xi=\xi(I)=(\xi_1,\xi_2,\cdots,\xi_t)$ by $\xi_i:=\mathbf{1}_{\{x_i \in I\}}$, and let $|\xi|$ be the length of $\xi$ (so $|\xi(I)|=t(I)$). Finally, let $s=s(I)=|\supp(\xi)|$ and define $\psi=\psi(I)=I\cap X^*$.

Notice that $I$ is determined by $(\xi, \psi)$, namely (as earlier) $I \setminus X^*$ is determined by $\xi$ (and $I \cap X^*=\psi$).


Consider a random (uniform) irredundant set $\II$. Our various parameters ($\xi, \psi, \ldots$) are then random variables, which will be denoted by $\bxi$ and so on. Since each of $\II$ and $(\bxi,\bpsi)$ determines the other and $\bxi$ determines $\TT$, we have (using parts (a) and (b) of Lemma \ref{ent})
\begin{eqnarray}
H(\II) &=& H(\bxi) + H(\bpsi|\bxi)\nonumber \\
& =& H(\TT) +H(\bxi|\TT)+ H(\bpsi|\bxi)\nonumber \\
&\leq& \log n +H(\bxi|\TT) + H(\bpsi|\bxi) \label{ineq:Hi}.
\end{eqnarray}

Notice that, by Lemma \ref{ent} (a),
\[ H(\bxi|\TT=t)\le t\]
\noindent for any $t$ and
\begin{equation}\label{psibd} H(\bpsi|\bxi=\xi)\le n-|\xi| ~ (=n-t)\end{equation}
\noindent for any $\xi$. Thus the sum of the last two terms in (\ref{ineq:Hi}) is at most
\[  \sum_t \pr(\TT=t)\left[ H(\bxi|\TT=t)+\sum_{|\xi|=t} \pr(\bxi=\xi|\TT=t)H(\bpsi|\bxi=\xi) \right]
 \le n, \] 
\noindent and we would like to somewhat improve these bounds. (Since we aim for $H(\II)<n-\gO(n)$, the $\log n$ in (\ref{ineq:Hi}) is irrelevant.) The next lemma, giving such a gain in (\ref{psibd}) when $t$ is small, is our main point.

\begin{lemma} \label{lem:Hpsixi}
For any $\xi$ with $|\xi|=t <\eps n/2$,
\[ H(\bpsi|\bxi=\xi) \le n-t-\vt n, \]
where $\vt=\vt(\eps)=2^{-O(1/\eps)}$.
\end{lemma}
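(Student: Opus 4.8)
\textbf{Proof proposal for Lemma \ref{lem:Hpsixi}.}

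The plan is to exploit the fact that, when $t=|\xi|$ is small, the graph $G^*=G^*(\xi)$ still contains almost all of $X$ (namely $n-t$ vertices) yet has bounded degree ($d_{Y^*}(x)<M$ for all $x\in X^*$) on the $X$-side, so it must contain a large induced matching built from $G^*$ itself; and by the hypothesis $\im(G)<(1-\eps)n$, this induced matching cannot be too large, which forces a genuine \emph{deficiency} in the number of irredundant subsets of $X^*$. Concretely, $\bpsi=\II\cap X^*$ ranges over irredundant sets of $G^*$ (restricted to the event $\bxi=\xi$), so by Lemma \ref{ent}(a) it suffices to show
\[
\log\irr(G^*)\le |X^*|-\vt n = n-t-\vt n .
\]
So the real task is a deterministic statement: a bipartite graph $H$ on $X^*\cup Y^*$ with $|X^*|=n-t$, all $X^*$-degrees $<M$, and $\im(H)<(1-\eps)n$ (hence, since $t<\eps n/2$, $\im(H)<(1-\eps/2)|X^*|$, say) has $\log\irr(H)\le |X^*|-\vt n$ with $\vt=2^{-O(1/\eps)}$.

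To prove this deterministic bound I would first extract structure from the bounded-degree condition. Greedily build an induced matching inside $H$: repeatedly pick an edge $x_jw_j$ and delete $x_j$, $w_j$, all $X^*$-neighbors of $w_j$, and all $Y^*$-neighbors of $x_j$; since $X^*$-degrees are $<M$ and, after the reductions performed by the algorithm, each $w\in Y^*$ still plausibly has controlled degree into $X^*$ (this point needs care — see below), each step removes only $O(M)$ vertices of $X^*$, so we obtain an induced matching with $\Omega(|X^*|/M)$ edges unless $H$ runs out of edges, i.e. unless $X^*$ has $\Omega(|X^*|)$ isolated vertices. By the hypothesis $\im(H)<(1-\eps/2)|X^*|$, the greedy matching is \emph{not} a near-perfect matching of a near-spanning set, so a positive fraction — of order $\eps/M = \Omega(\eps^2)$ — of the vertices of $X^*$ lie outside $V(\text{matching})$; these are vertices that are either isolated or "blocked" by already-chosen matched vertices. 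The key qualitative consequence I want is: there is a set $W\subseteq X^*$ with $|W|=\Omega(\eps^2)\,|X^*| \ge \vt' n$ such that each $x\in W$ has a neighbor $y(x)\in Y^*$ with $d_{X^*}(y(x))\ge 2$, or is altogether isolated — in either case $x$ is "redundant-prone".

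Now I convert this into an entropy/counting saving on $\irr(H)$. If $x\in W$ is isolated in $H$ then $x$ lies in \emph{no} irredundant set, so it contributes $0$ rather than $1$ bit — a clean saving of $|W\cap(\text{isolated})|$ bits. For the non-isolated $x\in W$, group them: since degrees into $Y^*$ are $<M$, we can find (by a standard sunflower/greedy argument) a subfamily $W'\subseteq W$ with $|W'|=\Omega(|W|/M^{O(1)})=2^{-O(1/\eps)}n$ together with, for each $x\in W'$, a witness $y(x)\in Y^*$ and a second vertex $x'\ne x$ with $y(x)\sim x'$, arranged so that the pairs are "independent" (disjoint supports). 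For such an $x$, in an irredundant set $I$ we cannot have $x\in I$ while $N(x)\subseteq N(I\setminus\{x\})$; the presence of $x'$ (and control of the local neighborhoods) lets me argue that for each such pair the indicator of $x$ in $I$ is constrained by the rest of $I$, killing at least a constant fraction of a bit per pair. Summing, $\log\irr(H)\le |X^*|-c|W'|\le |X^*|-2^{-O(1/\eps)}n$, which is exactly the claim with $\vt=2^{-O(1/\eps)}$, and plugging back gives $H(\bpsi\mid\bxi=\xi)\le n-t-\vt n$.

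\textbf{Main obstacle.} The delicate point is controlling the \emph{$Y^*$-side} degrees. The termination condition only bounds $d_{Y^*}(x)$ for $x\in X^*$; a priori a vertex $y\in Y^*$ could have enormous degree into $X^*$, which both wrecks the greedy induced-matching construction (one step could delete all of $X^*$) and muddies the "redundancy" argument. The fix — and the part I expect to require the most work — is to use the algorithm's selection rule: $x_i$ is always chosen of \emph{maximum} current degree, so once the process stops, the vertices remaining in $X^*$ have had their high-degree neighbors systematically removed; more usefully, each surviving $y\in Y^*$ that retains high $X^*$-degree would have forced the process to keep picking its neighbors, and one can bound the number of "bad" high-degree $y$'s by a charging argument against $t$ (which is $<\eps n/2$). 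So I would first prove a structural lemma: at most $O(t)$ vertices of $Y^*$ have $d_{X^*}(\cdot)$ exceeding some threshold $M' = M'(\eps)$, and then run the greedy construction avoiding those $O(t)=O(\eps n)$ exceptional vertices, absorbing their loss into the slack $\eps/2$. Getting the quantitative dependence to come out as $2^{-O(1/\eps)}$ (rather than, say, $2^{-O(1/\eps^2)}$) will require being somewhat careful about how the $1/\eps$ factors from $M$, the matching slack, and the sunflower step compound — but all of these are single exponentials in $1/\eps$, so the final bound is of the stated form.
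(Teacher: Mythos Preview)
Your reduction to bounding the number of irredundant subsets of $X^*$ in $G^*$ is sound (and it is worth noting explicitly why $\psi=I\cap X^*$ is irredundant in $G^*$: a private $Y$-neighbor of $x\in I\cap X^*$ cannot have been deleted, since deletion would mean adjacency to some earlier $x_i\in I$). But the route you propose after that has a real gap, exactly at the point you flag as the ``main obstacle'': there is no control on $d_{X^*}(y)$ for $y\in Y^*$, and your suggested fix does not work. The algorithm selects $x_i$ of maximum degree \emph{into $Y_{i-1}$}; this says nothing about degrees of $Y$-vertices into $X$. A vertex $y\in Y^*$ can perfectly well be adjacent to all of $X^*$ (it survives as long as no $x_i$ with $\xi_i=1$ happens to be a neighbor), so the charging argument ``high-degree $y$'s would have forced the process to keep picking their neighbors'' has no content. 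This breaks both your greedy induced-matching construction and your sunflower step, and I do not see a repair along these lines.

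The paper's proof sidesteps the $Y^*$-degree issue entirely, via two simple moves. First, instead of building an induced matching greedily, set $\tilde X=\{x\in X^*: N_{Y^*}(x)\subseteq N_{Y^*}(X^*\setminus\{x\})\}$; each $x\in X^*\setminus\tilde X$ has a \emph{private} neighbor $y_x\in Y^*$, and $\{xy_x\}$ is automatically an induced matching of size $|X^*|-|\tilde X|=n-t-|\tilde X|$, forcing $|\tilde X|>\eps n/2$. Second --- and this is the point that replaces your sunflower/$Y$-degree argument --- for each $y\in N_{Y^*}(\tilde X)$ partition $N_{X^*}(y)$ into blocks of size $2$ or $3$; for $x\in\tilde X$ and each $y\in N_{Y^*}(x)$ pick a block-mate $x'\neq x$, and let $Z_x$ be the set of chosen block-mates. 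Then $|Z_x|<M$ (since $d_{Y^*}(x)<M$) and, crucially, each $z$ lies in fewer than $2M$ of the $Z_x$'s (at most two block-mates per $y\in N_{Y^*}(z)$), \emph{regardless of how large $d_{X^*}(y)$ is}. Now $W_x=Z_x\cup\{x\}$ cannot be wholly contained in an irredundant set, so $H(\bpsi_{W_x}\mid\bxi=\xi)\le\log(2^{|W_x|}-1)<|W_x|-2^{-M}\log e$, and Shearer's Lemma with weight $1/(2M)$ on each $W_x$ (topped up with singletons) gives the saving $|\tilde X|/(2M2^M)\cdot\log e=2^{-O(1/\eps)}n$. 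The partition trick is what makes the bounded-overlap condition for Shearer go through without any $Y$-side degree bound.
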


\begin{proof}

Given $\xi$ as in the Lemma, set
\[
\tX=\tX(\xi)=\{x\in X^*:  N_{Y^*}(x)\sub N_{Y^*}(X^*\sm\{x\})\}.
\]
\nin We have
\[(1-\eps)n>\im(G)\ge \im(G^*) \ge n-t-|\tX|,\]
where the last inequality holds since for each $x \in X^* \setminus \tX$ there is some $y_x \in Y^*$ with $N_{X^*}(y_x)=\{x\}$, and $\{(x,y_x) : x \in X^* \setminus \tX\}$ is an induced matching of $G^*$ of size $|X^* \setminus \tX|=n-t-|\tX|$. Thus
%
%
\begin{equation}\label{tildeXlb}
|\tX|> \eps n -t > \eps n/2.
\end{equation}

For each $x \in \tX$ fix some $Z_x\sub X^*\sm \{x\}$ such that
\begin{equation}\label{z1}
N_{Y^*}(x)\sub N_{Y^*}(Z_x),
\end{equation}
\begin{equation}\label{z2}
|Z_x|<M
\end{equation}
and
\begin{equation}\label{z3}
\forall z\in X^* \quad |\{x\in \tX:z\in Z_x\}| <2 M.
\end{equation}
To see that we can do this:
For each $y\in N_{Y^*}(\tX)$ let $\Pi_y$ be a partition of $N_{X^*}(y)$ into blocks of
size 2 or 3.  (Note $y\in N_{Y^*}(\tX)$ implies $d_{X^*}(y)\geq 2$.)
Then to form $Z_x$, for each $y\in N_{Y^*}(x)$ choose one $x'\neq x$ from the block of
$\Pi_y$ containing $x$ and take $x'\in Z_x$. Note that each $x\in X^*$ has degree less than $M$ in $G^*$ (see step 3 of the algorithm), so we have (\ref{z2}) and (\ref{z3}) (and (\ref{z1}) is clear).

Let $W_x=Z_x \cup \{x\}$ ($x \in \tilde X$), and $\psi_{_A}=\psi \cap A$ for any $A \subseteq X$. Note that for each $x \in X^*$,
\begin{eqnarray}
H(\bpsi_{_{W_x}}|\bxi=\xi) &\le& \log[2^{|W_x|}-1] \label{ineq:psibound} \\
&=& |W_x|+\log(1-2^{-|W_x|}) \nonumber \\
&<& |W_x|-2^{-M}\log e \nonumber.
\end{eqnarray}
\noindent (The first inequality follows from irredundancy: we cannot have $\psi_{_{W_x}}=W_x$.)

Now aiming to use Lemma \ref{lem:Sh}, form $\ga : 2^{X^*} \rightarrow \mathbb R_{\ge 0}$ by assigning weight $1/(2M)$ to each $W_x$ (thus assigning
each set weight some multiple of $1/(2M)$, with the total weight of the sets containing any given $x'$ at most $1$ by (\ref{z3}))
and supplementing with weights on the singletons to get to
\eqref{fractiling}. Then by Lemma \ref{lem:Sh},
\begin{eqnarray} 
H(\bpsi|\bxi=\xi) & \le& \sum_{A\sub X^*}\ga_AH(\bpsi_A|\bxi=\xi) \nonumber \\
&=&\sum_{x \in \tilde X} \alpha_{_{W_x}} H(\bpsi_{_{W_x}}|\bxi=\xi)+\sum_{x \in X^*} \alpha_{\{x\}} H(\bpsi_{\{x\}}|\bxi=\xi) \label{eq:alpha}.
\end{eqnarray}

Now (\ref{ineq:psibound}) and the fact that $\alpha$ assigns total weight $|\tX|/(2M)$ to the $W_x$'s give
\[ 
\sum_{x \in \tilde X} \alpha_{_{W_x}} H(\bpsi_{_{W_x}}|\bxi=\xi) < \sum_{x \in \tilde X} \alpha_{_{W_x}}|W_x|-|\tilde X|(2M2^M)^{-1}\log e,
\] 
\noindent while the second sum in (\ref{eq:alpha}) is at most $\sum_{x \in X^*} \alpha_{\{x\}}$ (since $H(\bpsi_{\{x\}}|\bxi=\xi) \le 1$). Thus the entire bound in (\ref{eq:alpha}) is at most
\begin{eqnarray} \sum_{x \in \tilde X} \alpha_{_{W_x}} |W_x| + \sum_{x \in X^*} \alpha_{\{x\}} - |\tilde X| (2M2^M)^{-1}\log e
 &=& |X^*| - |\tX|(2M2^M)^{-1}\log e \nonumber \\
 &<& n-t - \vt n, \nonumber \end{eqnarray}

\noindent
where $\vt = (\eps/2)(2M2^M)^{-1}\log e=2^{-O(1/\eps)}$ (see (\ref{tildeXlb})) and we use
\[
\sum_{x \in \tilde X} \alpha_{_{W_x}}|W_x|+\sum_{x \in X^*} \alpha_{\{x\}}=\sum_{x \in X^*} \sum_{A \ni x} \alpha_A =|X^*|.
\]
\end{proof}

\begin{cor} \label{lem:smallt}
Let $\gz=\pr(\TT < \eps n/2)$. Then with $\vt$ as in Lemma \ref{lem:Hpsixi},
\[H(\bpsi|\bxi) \le n-\E\TT -\gz\vt n.\]
\end{cor}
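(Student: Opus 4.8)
The plan is to derive the stated bound on $H(\bpsi|\bxi)$ by splitting the expectation over the value of $\TT$ into the ``small $t$'' regime ($t<\eps n/2$), where Lemma~\ref{lem:Hpsixi} gives an improvement of $\vt n$ over the trivial bound $n-t$, and the ``large $t$'' regime, where we simply use the trivial bound \eqref{psibd}. Concretely, I would write
\[
H(\bpsi|\bxi)=\sum_\xi \pr(\bxi=\xi)\,H(\bpsi|\bxi=\xi)
=\sum_t\pr(\TT=t)\sum_{|\xi|=t}\pr(\bxi=\xi\mid \TT=t)\,H(\bpsi|\bxi=\xi),
\]
and then bound the inner sum: for $t<\eps n/2$ it is at most $n-t-\vt n$ by Lemma~\ref{lem:Hpsixi}, and for $t\ge \eps n/2$ it is at most $n-t$ by \eqref{psibd}.

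The key computation is then just combining these two cases. Summing over $t$, the contribution is at most
\[
\sum_t \pr(\TT=t)(n-t)\;-\;\vt n\sum_{t<\eps n/2}\pr(\TT=t)
\;=\;n-\E\TT-\vt n\,\pr(\TT<\eps n/2)\;=\;n-\E\TT-\gz\vt n,
\]
using $\sum_t\pr(\TT=t)(n-t)=n-\E\TT$ and the definition $\gz=\pr(\TT<\eps n/2)$. This is exactly the claimed inequality, so the corollary follows immediately.

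There is no real obstacle here: the corollary is a routine bookkeeping step that packages Lemma~\ref{lem:Hpsixi} into a clean averaged form, isolating the ``gain'' term $\gz\vt n$ that will later be played off against the cost of $\TT$ being large. The only thing to be slightly careful about is that Lemma~\ref{lem:Hpsixi} is stated for individual $\xi$ with $|\xi|<\eps n/2$, so one should condition on $\bxi=\xi$ (not merely on $\TT=t$) before applying it, which is harmless since $|\xi|$ is a function of $\xi$. I would also note in passing that the inequality is only useful when $\gz$ is bounded away from $0$, i.e.\ when $\TT$ is not almost surely large — the complementary case (large $\TT$ likely) being handled separately later, presumably via a bound analogous to Lemma~\ref{lem:nobigt}.
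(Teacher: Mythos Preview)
Your proof is correct and essentially identical to the paper's: both expand $H(\bpsi|\bxi)$ as a sum over $\xi$, apply Lemma~\ref{lem:Hpsixi} for $|\xi|<\eps n/2$ and the trivial bound \eqref{psibd} otherwise, and collect terms to get $n-\E\TT-\gz\vt n$.
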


\begin{proof}
Using Lemma \ref{lem:Hpsixi} and (\ref{psibd}) we have
\begin{eqnarray*}
H(\bpsi|\bxi) &=& \sum_t\sum_{|\xi|=t} \pr(\bxi=\xi)H(\bpsi|\bxi=\xi)\\
&\leq &\sum_{t < \eps n/2}\pr(\TT=t)(n-t-\vt n)+
\sum_{t \ge \eps n/2}\pr(\TT=t)(n-t)\\
&= & n-\E\TT -\gz\vt n.
\end{eqnarray*} \end{proof}

The gain for larger $t$ is easier. Noting that
\[ \Ss \le s_0:=2n/M=\eps n/6,\]
\noindent setting $H(1/3)=1-\gamma$ and using Proposition \ref{prop:binom}, we have, for any $t\ge\eps n/2$,
\[ H(\bxi|\TT=t) \le \log \sum_{s \le s_0} {t \choose s} \le H(1/3)t=(1-\gamma)t, \]
\noindent whence (recall $\zeta = \pr(\TT < \eps n/2$))
\begin{eqnarray*} H(\bxi|\TT) &\le& \sum_{t < \eps n/2} \pr(\TT=t)t + \sum_{t \ge \eps n/2} \pr(\TT=t)(1-\gamma)t \\
& \le& \E\TT-(1-\zeta)\gamma \eps n/2.\end{eqnarray*}

\noindent Finally, combining this with (\ref{ineq:Hi}) and Corollary \ref{lem:smallt} yields
\begin{eqnarray*} H(\II) &\le& \log n+n-[\zeta \vt +(1-\zeta)\gamma\eps /2]n\\
& \le& \log n + n - \vt n\end{eqnarray*}
\noindent (since the $\vt$ produced in Lemma \ref{lem:Hpsixi} is much smaller than $\gamma\eps/2$), proving Theorem \ref{thm:unbal_irr}.

\subsection{Tightness}\label{sec:tightness}

Define a bipartite graph $B_m$ on $X \cup Y = [m] \cup [2m]$ (disjoint copies, of course) as follows. 

\begin{enumerate}\setlength\itemsep{.5em}

\item If $x \in X$ and $x \le m-1$, then $x\sim y$ iff $y=x$ or $y=m-1+x$.

\item If $x=m \in X$, then $x \sim y$ iff $m \le y \le 2m-2$.

\end{enumerate}

\noindent It is easy to see that $\im(B_m)=m-1$, and $\mis(B_m)=2^{m}-1$.

\medskip
Now, for $\eps>0$ and $n$ with $1/\eps$ and $\eps n$ integers, let $G$ be the union of $\eps n$ disjoint copies of $B_{1/\eps}$.
Then $G$ is bipartite on $[n] \cup [2n]$, $\im(G)=(1-\eps)n$, and $\mis(G)=(2^{1/\eps}-1)^{\eps n}$. So,
\begin{eqnarray*}
\log\mis(G) &=& \eps n \log(2^{1/\eps}-1)\\
&=&\eps n(\frac{1}{\eps} + \log {(1-2^{-1/\eps})})\\
&=&n(1-2^{-1/\eps} \eps\log e+O(2^{-2/\eps}))
\end{eqnarray*}
\noindent (where the implied constant does not depend on $\eps$).

\mn
\textbf{Acknowledgment} We would like to thank Yuri Rabinovich for suggesting these questions.




\begin{thebibliography}{10}

\bibitem{Sh}
F.R.K. Chung, P. Frankl, R. Graham and J.B. Shearer,
Some intersection theorems for ordered sets and graphs,
pp. 23-37 in \emph{J. Comb. Theory Ser. A} \textbf{48},
1986.

\bibitem{FG}
J. Flum and M. Grohe,
Parameterized Complexity Theory,
Springer, New York, 2006.


\bibitem{Furedi}
Z. F\"uredi,
The number of maximal independent sets in connected graphs,
pp. 463-470 in \emph{J. Graph Theory} \textbf{11},
1987.


\bibitem{HT}
M. Hujter and Z. Tuza,
The number of maximal independent sets in triangle-free graphs,
pp. 284-288 in \emph{SIAM J. Discrete Math} \textbf{6},
1993.

\bibitem{McE}
R.J. McEliece, The Theory of Information and Coding, \emph{Addison-Wesley}, London, 1977.

\bibitem{MM}
J.W. Moon and L. Moser,
On cliques in graphs,
pp. 23-28 in \emph{Israel J. Math.} \textbf{3},
1965.



\bibitem{Yuri}
Y. Rabinovich, personal communication, 2013.


\bibitem{RTV}
Y. Rabinovich, J.A. Telle and M. Vatshelle,
Upper Bounds on Boolean-Width with Applications to Exact Algorithms,
pp. 308-320 in \emph{IPEC 2013:  Parameterized and Exact Computation},
Springer, 2013.

\bibitem{Sap2001}
A.A. Sapozhenko,
On the Number of Independent Sets in Extenders,
pp. 56 in \emph{Diskret. Matem.} \textbf{13} (1),
2001 (in Russian).

\bibitem{Sap2007}
A.A. Sapozhenko,
The Number of Independent Sets in Graphs,
pp. 116-118 in \emph{Moscow Univ. Math. Bull.} \textbf{62},
2007.






\end{thebibliography}

\end{document}